\documentclass[12pt]{article}
\usepackage{graphicx}
\usepackage{amsfonts,amssymb,amsthm,eucal,amsmath}
\usepackage{amsbsy}
\usepackage{amscd}
\usepackage{url}
\setlength{\parindent}{0pt}

\usepackage[inner=30mm, outer=30mm, textheight=225mm]{geometry}

\newtheorem{thm}{Theorem}[section]

\newtheorem{lemma}[thm]{Lemma}

\newtheorem{cor}[thm]{Corollary}
\theoremstyle{definition}
\newtheorem{defn}[thm]{Definition}

\newtheorem{ex}[thm]{Example}
\newtheorem{algor}[thm]{Algorithm}

\theoremstyle{remark}
\newtheorem{rmk}[thm]{Remark}

\newcommand{\co}{\colon\thinspace}

\def\H{\mathbb{H}}

\def\Z{\mathbb{Z}}
\def\tri{\mathcal{T}}

\title{Triangulations of hyperbolic 3-manifolds admitting strict angle structures} 

\author{Craig D. Hodgson, J. Hyam Rubinstein and Henry Segerman}

\begin{document}
\maketitle

\begin{abstract}
It is conjectured that every cusped hyperbolic 3-manifold has a decomposition into positive volume ideal hyperbolic tetrahedra (a ``geometric'' triangulation of the manifold). Under a mild homology assumption on the manifold we construct topological ideal triangulations which admit a strict angle structure, which is a necessary condition for the triangulation to be geometric. In particular, every knot or link complement in the 3-sphere has such a triangulation. We also give an example of a triangulation without a strict angle structure, where the obstruction is related to the homology hypothesis, and an example illustrating that the triangulations produced using our methods are not generally geometric. 
\end{abstract}

This work was supported by Australian Research Council grant DP1095760.

\section{Introduction}

Epstein and Penner \cite{epstein-penner} showed that every cusped\footnote{By \textbf{cusped}, we mean non-compact with finite volume.} hyperbolic 3-manifold has a 
decomposition into convex ideal polyhedra, which in the case of a manifold with one cusp is canonical. In many cases (for example punctured torus bundles and 2-bridge knot complements, see Gu\'eritaud and Futer \cite{gueritaud_torus_bundles}) the polyhedra of this canonical decomposition are tetrahedra, and we have a canonical ideal triangulation of the manifold in which every tetrahedron is positively oriented and so has positive volume (a {\bf geometric} ideal triangulation). Such a structure can be very useful and has been studied by many authors, starting with Thurston~\cite{thurston}. \\

However, in general the polyhedra may be more complicated than tetrahedra. The obvious approach to try to get a geometric ideal triangulation is to subdivide the polyhedra into tetrahedra. The difficulty is that the subdivision induces triangulations of the polygonal faces of the polyhedra, and these triangulations may not be consistent with each other. This can be fixed by inserting flat hyperbolic tetrahedra in between the two polyhedra, building a layered triangulation on the polygon that bridges between the two triangulations. The cost paid is the addition of the flat tetrahedra. The resulting triangulation is not geometric, and does not have some of the nice properties that a triangulation entirely consisting of positive volume tetrahedra has (see Choi~\cite{choi04} and Petronio and Weeks~\cite{petronio-weeks}).\\

Petronio and Porti~\cite{petronio-porti} discuss the problem of finding a geometric ideal triangulation, which remains unsolved. Nevertheless, it is commonly believed that every cusped hyperbolic 3-manifold has an ideal triangulation with positive volume tetrahedra, and experimental evidence from SnapPea~\cite{snappea} supports this. Wada, Yamashita and Yoshida \cite{han_yoshida,wada_yamashita_yoshida} have proved the existence of such triangulations given certain combinatorial conditions on the polyhedral decomposition, and Luo, Schleimer and Tillmann \cite{luo_schleimer_tillmann} show that such triangulations exist virtually.\\

In this paper we investigate an easier problem, that of finding an ideal triangulation with a strict angle structure. The existence of such a structure is a necessary condition for a geometric ideal triangulation. Our main result is:

\begin{thm}\label{no non periph H1 implies sas}
Assume that $M$ is a cusped hyperbolic 3-manifold homeomorphic to the interior of a compact 3-manifold $\overline{M}$ with torus or Klein bottle boundary components. If $H_1(\overline{M}; \Z_2) \to H_1(\overline{M}, \partial \overline{M}; \Z_2)$ is the zero map then $M$ admits an ideal triangulation with a strict angle structure. 
\end{thm}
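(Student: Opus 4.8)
The plan is to carry out the ``obvious approach'' mentioned in the introduction — subdivide the Epstein--Penner cells and bridge with flat tetrahedra — and then to show that, \emph{under the homology hypothesis}, the resulting semi-taut angle structure can be deformed to a strict one. By Epstein--Penner, $M$ decomposes into convex ideal polyhedra $P_1,\dots,P_k$. Subdivide each $P_j$ into ideal tetrahedra by coning from one of its ideal vertices (a convex polyhedron is star-shaped from each of its vertices), triangulating the ``far'' faces by diagonals; some of these cells may be flat, but only finitely many. The subdivisions induce triangulations of the $2$-faces of the decomposition which generally disagree across a face, so between the two sides of each face insert a layered triangulation of $(\text{ideal polygon})\times[0,1]$ realizing a sequence of diagonal flips from one induced triangulation to the other. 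The result is a topological ideal triangulation $\mathcal T$ of $M$, and its flat-geometric realization furnishes a semi-taut angle structure $\alpha_0$: each tetrahedron coming from a $P_j$ carries its genuine hyperbolic dihedral angles in $(0,\pi)$, while each flat tetrahedron — whether from a $P_j$ or from a layered region — carries angles $(0,0,\pi)$. The geometry guarantees $\alpha_0$ solves the linear angle equations (vertex sums $\pi$, edge sums $2\pi$), and the set $F$ of corners where $\alpha_0$ is non-strict is contained in the finitely many flat cells.

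Next, reduce strictness to a dual obstruction. Let $V_0\subset\mathbb R^{3n}$ be the space of homogeneous solutions of the angle equations. Deforming $\alpha_0$ to a strict angle structure amounts to finding $v\in V_0$ with $v_{t,i}>0$ at each corner of $F$ carrying angle $0$ and $v_{t,i}<0$ at those carrying angle $\pi$. By the theorem of the alternative (Gordan/Stiemke), such a $v$ exists unless there is a nonzero, sign-correct vector $w$ supported on $F$ lying in the span of the vertex- and edge-constraint functionals, $w=\sum_t a_t\,\mathbf v_t+\sum_e b_e\,\mathbf e_e$; this is exactly the ``vertical/dual'' obstruction of the angle-structure versus normal-surface duality. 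The intended claim is that the edge-coefficient vector $(b_e)$, reduced mod $2$, is a well-defined $\mathbb Z_2$-cycle on $\mathcal T$ and hence represents a class in $H_1(\overline M;\mathbb Z_2)$.

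Then invoke the hypothesis. The support condition confines the mod-$2$ reduction of $w$ to the flat cells, i.e.\ essentially to the layered regions; tracing the edge equations through a single layered block should show that the only closed combinations supported there are dual to curves lying on the $2$-faces of the polyhedral decomposition, so the class of $(b_e)\bmod 2$ is carried by a union of those faces, and its image in $H_1(\overline M,\partial\overline M;\mathbb Z_2)$ decides whether $w$ is a genuine obstruction. Since $H_1(\overline M;\mathbb Z_2)\to H_1(\overline M,\partial\overline M;\mathbb Z_2)$ is the zero map, this class is trivial; combined with the non-negativity and support constraints, this forces $w$ to be a scalar multiple of the all-ones functional $\sum_t\mathbf v_t$, which is supported at every corner, not merely on $F$. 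Hence $w=0$: there is no obstruction, so $\alpha_0$ deforms to a strict angle structure on $\mathcal T$, proving the theorem. For the final assertion, the complement of a knot or link in $S^3$ has $H_1$ generated by meridians, which are peripheral, so the hypothesis holds automatically.

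The main obstacle is the middle block: pinning down precisely which dual vectors $w$ can occur, verifying that $(b_e)\bmod 2$ is genuinely a cycle and that its homology class is the correct obstruction, and checking that the inserted layered regions introduce no spurious obstructions of their own — there are many new edges, and one must confirm both that $\alpha_0$ solves the angle equations across them and that their internal combinatorics cannot manufacture a bad $w$ independently of $H_1(\overline M)$. A secondary, bookkeeping-heavy difficulty is to arrange the subdivision — the choice of coning vertex for each $P_j$ and of the flip sequence for each face — coherently; it is plausible that some of the freedom in these choices must again be spent, using the $\mathbb Z_2$-homology hypothesis, to keep the $1$-skeleton free of short ``bad'' loops around which the edge equations could fail to admit a strict solution.
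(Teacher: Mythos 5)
Your setup (Epstein--Penner decomposition, coning each polyhedron, layered bridges between mismatched face triangulations, and the resulting semi-angle structure with flat tetrahedra only in the bridging regions) matches the paper's construction, but the proof has a genuine gap exactly where you yourself flag ``the main obstacle'': that middle block is not a technical loose end, it is the content of the theorem. The first concrete problem is your identification of the dual obstruction. In the duality actually available (Kang--Rubinstein, Luo--Tillmann; Theorem \ref{thm: duality}), the obstruction to straightening the semi-angle structure of Remark \ref{rmk: natural angle structure} is not a mod $2$ one-cycle of edge coefficients but a \emph{normal surface class}: by Corollary \ref{cor: sas iff no vert} a strict angle structure exists if and only if there is no vertical class, i.e.\ a nonzero normal class whose quadrilaterals all carry angle $0$. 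The obstruction is therefore two-dimensional. In the paper it is converted into a closed \emph{embedded} surface $S'$ normal with respect to the PPP-cellulation, and the hypothesis is used through intersection numbers: if $H_1(\overline{M};\Z_2)\to H_1(\overline{M},\partial\overline{M};\Z_2)$ is zero, every loop is a sum of peripheral loops mod $2$, hence meets every closed surface evenly, contradicting the existence of a loop (built from a maximal spanning tree of the dual graph of the polyhedra plus one further edge) meeting $S'$ oddly. Your claims that $(b_e)\bmod 2$ is a cycle in $H_1(\overline{M};\Z_2)$, that its image in relative homology ``decides'' the obstruction, and that its vanishing forces $w$ to be a multiple of the all-ones functional and hence zero, are asserted rather than proved, and they are not the correct dualization of the angle equations.

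Second, even granting the correct duality statement, the substantive steps are absent from your outline: (i) the arc-pattern analysis of layered triangulations (Lemmas \ref{change in arc pattern}, \ref{change by +- lambda}, \ref{quadrilaterals in pillow adjacent cone vertices}, \ref{quadrilaterals in only 4 and 6 gons}), which shows that a vertical class can carry quadrilaterals only in $4$- or $6$-gonal pillows and is what allows the class to be reassembled into an embedded surface made of vertex-linking disks, bigons and twisted disks; (ii) the special choice of cone vertices along a maximal spanning tree in Algorithm \ref{polyhedra cone vertices to triangulation}, arranged so that no pillow sits on a tree face, which is what makes $S'$ disjoint from the tree and produces the odd-intersection loop --- your closing remark that ``some of the freedom in these choices must be spent'' gestures at this but contains no construction; and (iii) the case in which every twisted disk has \emph{even} multiplicity, which the homology hypothesis cannot see at all and which the paper handles by a separate Haken-sum/fundamental-surface parity argument. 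None of these can be waved away: Example \ref{4-gon in m136} exhibits a triangulation with a vertical normal class and hence no strict angle structure, so the obstruction does not vanish for formal or general-position reasons, and an argument tying it to the $\Z_2$-homology hypothesis of the precise kind above is unavoidable. (A small further inaccuracy: coning a convex ideal polyhedron from one of its vertices produces only positive-volume tetrahedra, so the flat cells occur only in the layered pillows, not ``from a $P_j$''.)
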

\begin{cor}\label{link complements have sas}
If $M$ is a hyperbolic link complement in $S^3$, then $M$ admits an ideal triangulation with a strict angle structure. \end{cor}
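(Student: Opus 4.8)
The plan is to deduce Corollary~\ref{link complements have sas} directly from Theorem~\ref{no non periph H1 implies sas} by verifying that a hyperbolic link complement satisfies the hypotheses of that theorem. First I would write $L \subset S^3$ for the link and set $\overline{M} = S^3 \setminus \nu(L)$, where $\nu(L)$ is an open tubular neighbourhood of $L$. Then $\overline{M}$ is a compact 3-manifold whose boundary consists of $n$ tori, one for each component of $L$, and $M = S^3 \setminus L$ is homeomorphic to the interior of $\overline{M}$; in particular the Klein bottle case does not arise, so the topological hypothesis of Theorem~\ref{no non periph H1 implies sas} holds automatically.

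The substantive point is to check that $H_1(\overline{M}; \Z_2) \to H_1(\overline{M}, \partial \overline{M}; \Z_2)$ is the zero map. I would do this by showing that the inclusion-induced map $H_1(\partial \overline{M}; \Z_2) \to H_1(\overline{M}; \Z_2)$ is onto: exactness of the long exact sequence of the pair $(\overline{M}, \partial \overline{M})$ at $H_1(\overline{M}; \Z_2)$ then forces the next map to vanish. To see the surjectivity, recall the classical computation (via Alexander duality, or via Mayer--Vietoris for $S^3 = \overline{M} \cup \nu(L)$) that $H_1(\overline{M}; \Z)$ is free abelian of rank $n$, generated by the meridians $\mu_1, \dots, \mu_n$ of the components of $L$. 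Each $\mu_i$ is represented by a loop lying on $\partial \overline{M}$, so the boundary map is onto over $\Z$, and hence also after reducing mod $2$.

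Having verified both hypotheses, Theorem~\ref{no non periph H1 implies sas} then produces an ideal triangulation of $M$ with a strict angle structure, which is exactly the assertion of Corollary~\ref{link complements have sas}. I do not expect any serious obstacle here: the only step needing attention is the homology calculation, and that is entirely standard. Alternatively one could argue via Poincar\'e--Lefschetz duality, identifying $H_1(\overline{M}, \partial \overline{M}; \Z_2)$ with $H^2(\overline{M}; \Z_2)$, but the meridian argument above seems the most transparent.
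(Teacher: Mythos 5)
Your proposal is correct and follows essentially the same route as the paper: both verify the homological hypothesis of Theorem \ref{no non periph H1 implies sas} by noting that $H_1(\overline{M})$ is generated by meridians, which are peripheral, so exactness of the long exact sequence of the pair forces $H_1(\overline{M};\Z_2)\to H_1(\overline{M},\partial\overline{M};\Z_2)$ to vanish. The only cosmetic difference is that the paper also offers an alternative justification (filling meridian disks to obtain $S^3$ minus balls) for the fact that peripheral elements generate $H_1(\overline{M})$.
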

\begin{proof}  For a link $L\subset S^3$, the peripheral elements generate $H_1(\overline{M})$, where $\overline{M}$ is the complement of a open regular neighbourhood of $L$ in $S^3$. 
This can be seen using a Mayer-Vietoris sequence, or just by observing that if we kill all of the meridian curves by filling in disks then we obtain $S^3$ minus a number of 3-balls, which has zero first homology. Therefore, the map to $H_1(\overline{M}, \partial \overline{M})$ is the zero map.
\end{proof}

Unfortunately, the triangulations we find will not generally be geometric (see Remark \ref{not_geometric} and Example \ref{strict angle struct not geometric}).\\

The idea of the construction is similar to the outline above of a method to find an ideal triangulation from the Epstein-Penner polyhedral decomposition: we carefully choose a subdivision of the polyhedra into ideal tetrahedra, and then insert flat tetrahedra to bridge between the identified faces of polyhedra that do not have matching induced triangulations. We cannot take our angle structure directly from the hyperbolic shapes of the resulting tetrahedra because of the inserted flat tetrahedra. One approach would be to try to deform the angle structure into a strict angle structure, opening out the flat tetrahedra so that each one has positive volume\footnote{Thinking along these lines, it is clear that we also have to be careful in how we insert the flat tetrahedra. For example, if we introduced an edge of degree 2, no strict angle structure would be possible.}. Instead of trying to deform the angle structure directly, we use work of Kang and Rubinstein~\cite{kang_rubinstein_2005} and Luo and Tillmann~\cite{luo_tillmann_2008} which relates the existence of a strict angle structure to the non-existence of certain ``vertical'' normal surface classes in the triangulation.  \\

This paper is organised as follows. In section \ref{sec: defns} we recall the standard definitions for ideal triangulations and angle structures. In section \ref{sec: triang constr} we introduce a framework for describing triangulations formed from polyhedral decompositions by subdividing the polyhedra and inserting flat tetrahedra into ``polygonal pillows'' to bridge between incompatible triangulations of the faces of the polyhedra. In section \ref{sec: detour}, we take a detour from the main thread of the paper, and give a direct combinatorial construction of an ideal triangulation which admits a strict angle structure for certain very special link complements. In section \ref{sec: layered triangs}, we give an algorithm for filling in a polygonal pillow with a layered triangulation, assuming that the polyhedra on either side are subdivided using a coning construction.
We then use this to give an algorithm that produces a triangulation given a polygonal decomposition. In section \ref{sec: dual normal classes}, we use the result of Kang-Rubinstein and Luo-Tillmann to link the existence of a strict angle structure to the non-existence of vertical normal classes. We also define the ``arc pattern'' for a normal class, which can roughly be thought of as the intersection of the normal class with the faces of the triangulation. In section \ref{sec: vert norm surf layered triang}, we use the way in which the arc pattern changes as we move through the layers of a layered triangulation to control the possible vertical normal classes.  We show that a vertical normal class can only have quadrilaterals in pillows with 4 or 6 sides. In Example \ref{4-gon in m136}, we give a triangulation with such a vertical normal class, with all quadrilaterals contained in a single tetrahedron (which can be seen as a layered triangulation of a 4-gonal pillow). This triangulation then has no strict angle structure, although it does have a semi-angle structure. In section \ref{sec: normal surfaces in PPP}, we give a homology condition on the manifold that rules out a vertical normal class, proving the main theorem. The strategy is to convert a vertical normal class into an embedded surface, in normal position relative to a cellulation of the manifold given by the decomposition into polyhedra and polygonal pillows. We illustrate the fact that our construction does not generally produce a geometric triangulation in Example \ref{strict angle struct not geometric}, with a triangulation which has a strict angle structure but that is not geometric. Finally, in section \ref{generalisations}, we discuss some possible generalisations of our results.

\section{Definitions}\label{sec: defns}

\subsection{Ideal triangulation}\label{sec:ideal triang}

Let $M$ be a topologically finite 3-manifold which is the interior of a compact 3-manifold with torus or Klein bottle boundary components. An ideal triangulation $\tri$ of $M$ consists of a pairwise disjoint union of standard Euclidean 3--simplices, $\widetilde{\Delta} = \cup_{k=1}^{n} \widetilde{\Delta}_k,$ together with a collection $\Phi$ of Euclidean isometries between the 2--simplices in $\widetilde{\Delta},$ called {\bf face pairings}, such that 
$(\widetilde{\Delta} \setminus \widetilde{\Delta}^{(0)} )/ \Phi$ is homeomorphic to $M.$
The simplices in $\tri$ may be singular. It is well-known that every non-compact, topologically finite 3--manifold admits an ideal triangulation. 

\subsection{Quadrilateral types}

Let $\Delta^3$ be the standard 3--simplex with a chosen orientation. Each pair of opposite edges corresponds to a normal isotopy class of quadrilateral disks in $\Delta^3,$ disjoint from the pair of edges. We call such an isotopy class a {\bf normal quadrilateral type}. Let $\tri^{(k)}$ be the set of all $k$--simplices in $\tri$. If $\sigma \in \tri^{(3)},$ then there is an orientation preserving map $\Delta^3 \to \sigma$ taking the $k$--simplices in $\Delta^3$ to elements of $\tri^{(k)},$ and which is a bijection between the sets of normal quadrilateral types. Let $\square$ denote the set of all normal quadrilateral types in $\tri.$\\

If $e\in \tri^{(1)}$ is any edge, then there is a sequence $(q_{n_1}, ..., q_{n_k})$ of normal quadrilateral types facing $e,$ which consists of all normal quadrilateral types dual to $e$ listed in sequence as one travels around $e.$ Then $k$ equals the degree of $e,$ and a normal quadrilateral type may appear at most twice in the sequence. This sequence is well-defined up to cyclic permutations and reversing the order. 

\subsection{Angle structures}

\begin{defn}[Generalised angle structure]\label{def:gen angle}

A function $\alpha\co \square \to \mathbb{R}$ is called a {\bf generalised angle structure on $(M,\tri)$} if it satisfies the following two properties:
\begin{enumerate}
\item If $\sigma^3 \in \tri^{(3)}$ and $q, q', q''$ are the three normal quadrilateral types supported by it, then
\begin{equation*}
   \alpha(q) + \alpha(q') + \alpha(q'') =\pi.
\end{equation*}
\item If $e\in \tri^{(1)}$ is any edge and $(q_{n_1}, ..., q_{n_k})$ is its normal quadrilateral type sequence, then
\begin{equation*}
\sum_{i=1}^k \alpha(q_{n_i})  =2\pi.
\end{equation*}
\end{enumerate}
\end{defn}

Dually, one can regard $\alpha$ as assigning angles $\alpha(q)$ to the two edges opposite $q$ in the tetrahedron containing $q$.\\

A generalised angle structure is called 
a {\bf semi-angle structure on $(M,\tri)$} if its image is contained in $[0,\pi],$ and 
a {\bf strict angle structure on $(M,\tri)$} if its image is contained in $(0,\pi).$

\section{Triangulations from polyhedral decompositions}\label{sec: triang constr}

\begin{defn}
In this paper, the term \textbf{polyhedron} will mean a combinatorial object obtained by removing all of the vertices from a 3-cell with a given combinatorial cell decomposition of its boundary. We further require that this can be realised as a positive volume convex ideal polyhedron in hyperbolic 3-space $\H^3$.
\end{defn}

\begin{defn}
A {\bf pyramid} is a polyhedron whose faces consist of an (ideal) $n$-gon and $n$ (ideal) triangles which form the cone of the boundary of the $n$-gon to a point. The polygon is called the {\bf base} of the pyramid, and the vertex not on the polygon is called the {\bf tip} of the pyramid.
\end{defn}

\begin{defn}
For a polyhedron $P$ and $v$ an (ideal) vertex of $P$, the {\bf coning of $P$ from $v$} is the decomposition of $P$ into pyramids whose tips are at $v$, and whose bases are the polygonal faces of $P$ not incident to $v$. 
\end{defn}

Note that a coning of $P$ from $v$ determines a triangulation of the faces of $P$ that are incident to $v$, but not of any other faces (unless, trivially, that face is a triangle itself). Note also that \emph{any} choice of triangulation of the base of a pyramid extends to a triangulation of the pyramid. See Figure \ref{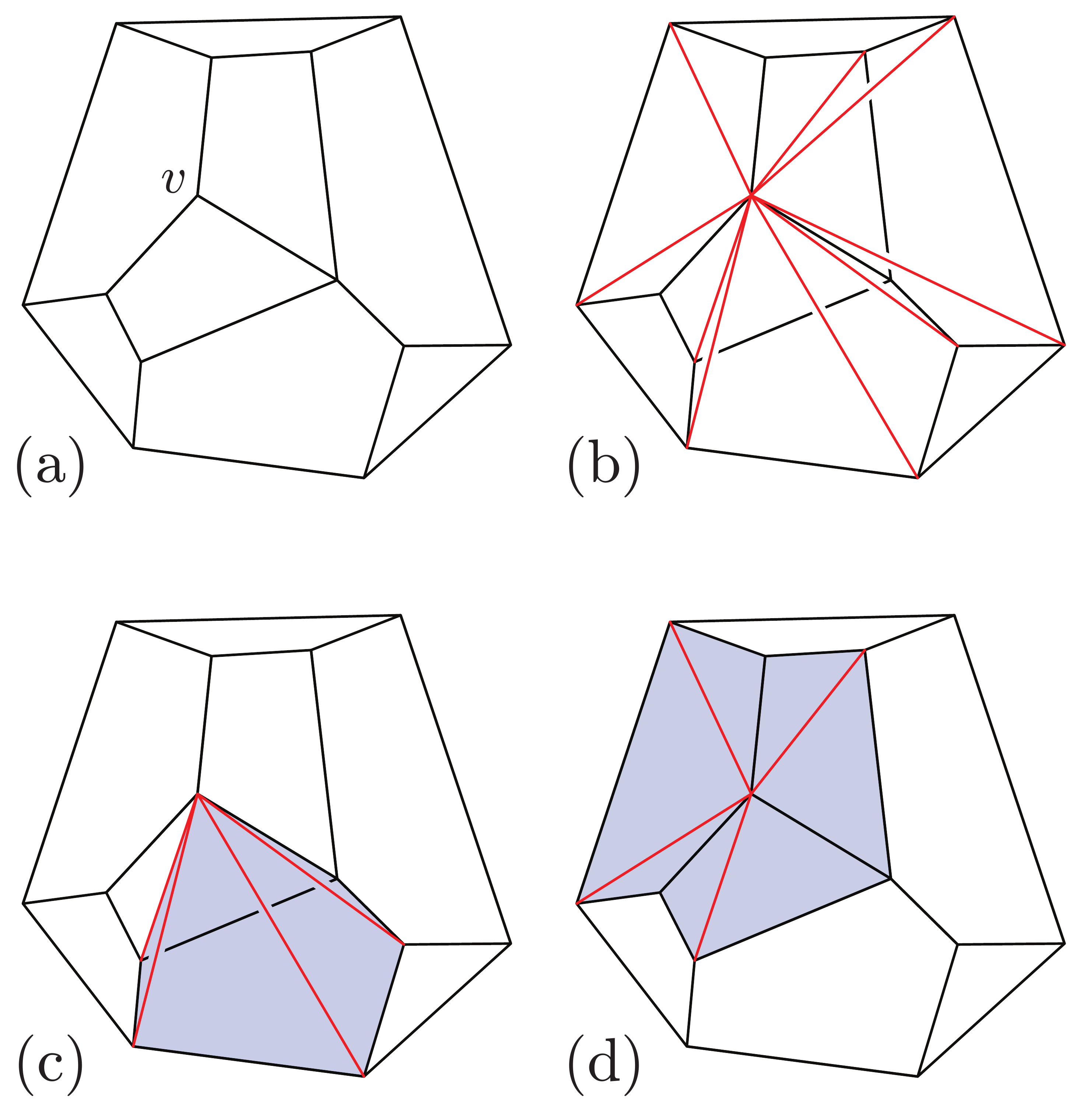}. 
\begin{figure}[htb]
\centering
\includegraphics[width=0.8\textwidth]{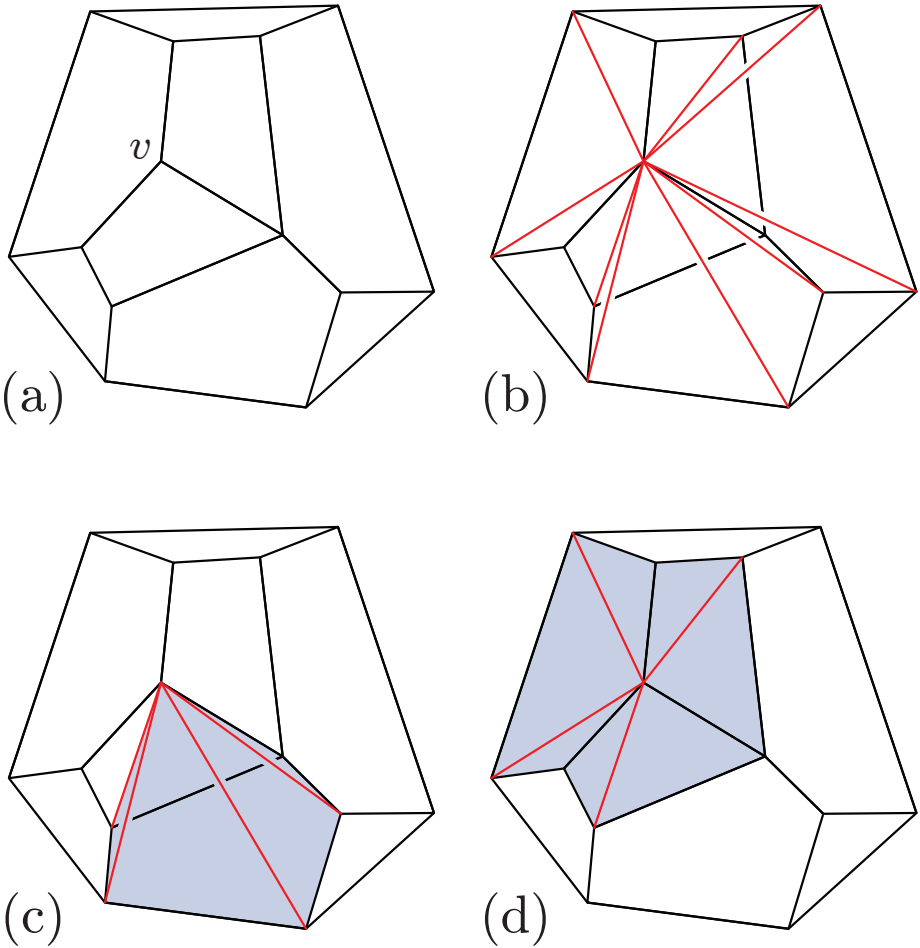}
\caption{In (a), a polyhedron $P$ with a choice of vertex $v$ (on the back of the polyhedron as viewed in the diagram). In (b) we add the edges corresponding to the coning of $P$ from $v$. In (c) a single pyramid is highlighted. In (d) we see that the faces of $P$ adjacent to $v$ have their triangulations determined by the coning, but not any of the other faces.}
\label{polyhedron_with_coning.pdf}
\end{figure}
\begin{defn}
An (ideal) {\bf polygonal pillow} or {\bf $n$-gonal pillow} is a combinatorial object obtained by removing all of the vertices from a 3-cell with a combinatorial cell decomposition of its boundary that has precisely two faces. The two faces are copies of an $n$-gon identified along corresponding edges.
\end{defn}
\begin{defn}
Suppose that $\mathcal{P}$ is a cellulation of a 3-manifold consisting of polyhedra 
and polygonal pillows with the property that polyhedra are glued to either polyhedra or polygonal pillows, but polygonal pillows are only glued to polyhedra. Then we call $\mathcal{P}$ a {\bf polyhedron and polygonal pillow cellulation}, or for short, a {\bf PPP-cellulation}. 
\end{defn}

\begin{defn}
Let $t$ be a triangulation of a polygon. A \textbf{diagonal flip} move changes $t$ as follows. First we remove an internal edge of $t$, producing a four sided polygon, one of whose diagonals is the removed edge. Second, we add in the other diagonal, cutting the polygon into two triangles and giving a new triangulation of the polygon.
\end{defn}

\begin{defn}
Let $Q$ be a polygonal pillow, with triangulations $t_-$ and $t_+$ given on its two polygonal faces $Q_-$ and $Q_+$. By a \textbf{layered triangulation} of $Q$, we mean a triangulation produced as follows. We are given a sequence of diagonal flips which convert $t_-$ into $t_+$. This gives a sequence of triangulations $t_-=L_1, L_2, \ldots, L_k=t_+$, where consecutive triangulations are related by a diagonal flip. Starting from $Q_-$ with the triangulation $t_-=L_1$, we glue a tetrahedron onto the triangulation $L_1$ so that two of its faces cover the faces of $L_1$ involved in the first diagonal flip. The other two faces together with the rest of $L_1$ produce the triangulation $L_2$. We continue in this fashion, adding one tetrahedron for each diagonal flip until we reach $L_k=t_+$, which we identify with $Q_+$.\\

We will refer to the triangulations $t_-=L_1, L_2, \ldots, L_k=t_+$ as the \textbf{layers} of the layered triangulation.
\end{defn}

Note that if $t_-$ and $t_+$ have shared edges, or even shared triangles, 
then we are abusing terminology  here since 
the resulting ``layered triangulation'' may not entirely `fill out' the polygonal pillow and give a genuine
triangulation of a topological polygonal pillow.

\begin{rmk}
Sleator, Tarjan and Thurston \cite{sleator_tarjan_thurston} have investigated triangulations of polygonal pillows, giving bounds on the number of diagonal flips required to change one triangulation of a polygon into another. 
\end{rmk}

\begin{defn}
Let $M$ be a cusped hyperbolic 3-manifold. A \textbf{geometric polyhedral decomposition} of $M$ is a decomposition of $M$ into positive volume convex ideal polyhedra in hyperbolic space $\H^3$.
\end{defn}
Note that every such $M$ has a geometric polyhedral decomposition, given by the Epstein-Penner decomposition \cite{epstein-penner}. Sakuma and Weeks \cite{sakumaweeks95} give some examples of Epstein-Penner decompositions. Examples of geometric polyhedral decompositions that are not necessarily canonical can be found in Thurston's notes \cite{thurston} and Aitchison-Reeves \cite{aitchison_reeves}.

\begin{rmk}\label{polyhedron triang dets PPP}
Suppose we are given a specified tetrahedral subdivision $\tri$ of the polyhedra of a geometric polyhedral decomposition $\mathcal{P}$ of a manifold. We think of the operation of inserting flat tetrahedra to bridge between incompatible triangulations of polygonal faces via an intermediate stage of using a PPP-cellulation. That is, given $(\tri, \mathcal{P})$, we produce a PPP-cellulation $\mathcal{P}'$, where $\mathcal{P}'$ is derived from $\mathcal{P}$, by inserting a polygonal pillow between two polyhedron faces if and only if their triangulations induced by $\tri$ do not match. We can then subdivide $\mathcal{P}'$, with each polyhedron being subdivided as in $\tri$ and some layered triangulation inserted into each polygonal pillow, to obtain a triangulation $\tri'$ of the whole manifold.
\end{rmk}
Note that this correspondence between pairs $(\tri, \mathcal{P})$ and PPP-cellulations $\mathcal{P}'$ applies no matter how we subdivide the polyhedra into tetrahedra; it is not specific to a coning subdivision.

\begin{rmk}[(Natural semi-angle structure)]  \label{rmk: natural angle structure}
If $P$ is a hyperbolic convex ideal polyhedron, as is the case for each polyhedron of a geometric polyhedral decomposition, then the tetrahedra obtained by coning $P$ from a vertex and then subdividing the resulting pyramids into tetrahedra using a triangulation of each pyramid's base all have positive volume. Now consider the polyhedra coming from a geometric polyhedral decomposition, with each polyhedron decomposed in this way. Add layered triangulations of polygonal pillows where the subdivisions do not agree. Then we can read off a semi-angle structure for the resulting triangulation using the ideal hyperbolic shapes for the tetrahedra inside the polyhedra, and with angles $0, 0$ and $\pi$ for each tetrahedron inside a polygonal pillow.
\end{rmk}

\section{A detour into alternating link complements}\label{sec: detour}

For certain links in $S^3$ we can construct an ideal triangulation which admits a strict angle structure in a relatively direct, combinatorial way. 

\begin{thm}\label{alternating link}
Suppose that $D$ is a reduced alternating diagram of a prime link $L \subset S^3$. Let $G \subset S^2$ be the 4-valent graph obtained by flattening the crossings of $D$. $G$ induces a decomposition of $S^2$ into faces, and we abuse notation by also referring to the resulting 2-complex as $G$. Let $G^*$ be the dual 2-complex. Suppose that $G$ has the following properties:
\begin{enumerate}
\item  \label{alt_link_hyp1}  $G$ has no bigons.
\item  \label{alt_link_hyp2} If a simple closed curve $C$ in $G^*$ intersects $G$ four times then one of the two components of $S^2 \setminus C$ contains a single vertex of $G$.
\item  \label{alt_link_hyp3} There exist vertices $v_1, v_2$ of $G$ so that no 2-cell of $G$ is adjacent to both $v_1$ and $v_2$.
\end{enumerate}
Then $S^3\setminus L$ has an ideal triangulation which admits a strict angle structure.
\end{thm}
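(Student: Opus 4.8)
The plan is to build the triangulation directly out of Menasco's polyhedral decomposition of an alternating link complement, using the three hypotheses for three separate purposes: (\ref{alt_link_hyp1}) and (\ref{alt_link_hyp2}) to guarantee that Menasco's decomposition is \emph{geometric} in the sense of Section \ref{sec: triang constr}, and (\ref{alt_link_hyp3}) to subdivide it into ideal tetrahedra by coning, with no polygonal pillows required.

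First I would recall Menasco's construction. Since $D$ is a connected, prime, reduced alternating diagram, $S^3\setminus L$ decomposes into two ideal polyhedra $P_+$ and $P_-$, each combinatorially isomorphic to the $2$-complex $G$: its ideal vertices are the crossings, its edges the edges of $G$, and its faces the regions of the diagram. The polyhedra are glued face to face, the face of $P_+$ over a region $R$ being identified with the face of $P_-$ over $R$ by a ``one-click'' rotation of the corresponding $n$-gon, the direction of the rotation depending on the checkerboard colour of $R$. No collapsing of bigons is needed here, precisely because of hypothesis (\ref{alt_link_hyp1}); in particular $L$ is not a $(2,n)$-torus link, so by the theorems of Menasco and Thurston $M = S^3\setminus L$ is hyperbolic. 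I would then invoke these same results together with the realisation theorems of Andreev and Rivin for convex ideal polyhedra: hypotheses (\ref{alt_link_hyp1}) and (\ref{alt_link_hyp2}) are exactly what is needed for the abstract polyhedron $G$ to be realised as a positive-volume convex ideal polyhedron and for the pair $(P_+, P_-)$, with its face gluings, to carry the complete hyperbolic structure on $M$. In the language of this paper we have now produced a geometric polyhedral decomposition $\mathcal{P}$ of $M$ consisting of exactly two polyhedra.

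Next I would use hypothesis (\ref{alt_link_hyp3}) to subdivide $\mathcal{P}$. Choose vertices $v_1, v_2$ of $G$ with the stated property, and cone $P_+$ from $v_1$ and $P_-$ from $v_2$. A region $R$ of the diagram gives a face $R^+$ of $P_+$ and a face $R^-$ of $P_-$, glued by a one-click rotation $\phi_R\co R^+ \to R^-$. If $R$ is incident to $v_1$, the coning of $P_+$ forces the fan triangulation of $R^+$ based at $v_1$; by (\ref{alt_link_hyp3}) the region $R$ is then not incident to $v_2$, so $R^-$ is the base of a pyramid in the coning of $P_-$, and since any triangulation of a pyramid base extends over the pyramid we may triangulate $R^-$ by the $\phi_R$-image of that fan, which is again a fan, based at $\phi_R(v_1)$. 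The case in which $R$ is incident to $v_2$ but not to $v_1$ is symmetric, and if $R$ is incident to neither vertex we simply choose any triangulation of $R$ and transport it across $\phi_R$. Since (\ref{alt_link_hyp3}) says that no region is incident to both $v_1$ and $v_2$, in every case the triangulations induced on $R^+$ and $R^-$ agree across $\phi_R$. Hence, in the framework of Remark \ref{polyhedron triang dets PPP}, the PPP-cellulation $\mathcal{P}'$ attached to this coning subdivision contains no polygonal pillows at all, and the coning subdivision is already a genuine ideal triangulation $\tri$ of $M$.

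Finally, because $\mathcal{P}$ is geometric and $\tri$ contains no flat tetrahedra, Remark \ref{rmk: natural angle structure} applies with an empty collection of polygonal pillows: reading off the dihedral angles of the positive-volume ideal hyperbolic tetrahedra produced by the coning yields a semi-angle structure all of whose values lie in $(0,\pi)$, that is, a strict angle structure---indeed $\tri$ is a geometric ideal triangulation of $M$. The step I expect to be the main obstacle is the middle one: establishing precisely why hypotheses (\ref{alt_link_hyp1}) and (\ref{alt_link_hyp2}), via the work of Menasco, Thurston, Andreev and Rivin, upgrade Menasco's purely combinatorial decomposition to a \emph{geometric} polyhedral decomposition. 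A secondary point requiring care is the bookkeeping in the coning step: one must check that the one-click rotations $\phi_R$ carry fan triangulations to fan triangulations, and that hypothesis (\ref{alt_link_hyp3}) really does rule out every possible mismatch between the triangulations induced on $R^+$ and $R^-$.
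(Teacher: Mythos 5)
Your third step (using hypothesis \ref{alt_link_hyp3} to choose cone vertices $v_1,v_2$ so that no face whose triangulation is forced by one coning is glued to a face whose triangulation is forced by the other, hence no polygonal pillows are needed) is exactly the paper's argument and is fine. The gap is in your middle step, and it is the one you yourself flag as ``the main obstacle'': you assert that hypotheses \ref{alt_link_hyp1} and \ref{alt_link_hyp2}, via Menasco, Thurston, Andreev and Rivin, upgrade the two-polyhedron decomposition to a \emph{geometric} polyhedral decomposition in the sense of Section \ref{sec: triang constr}, i.e.\ two positive-volume convex ideal hyperbolic polyhedra glued by isometries realising the complete structure on $M$. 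Rivin's theorem does not give this: it only realises each combinatorial polyhedron \emph{separately} as a convex ideal hyperbolic polyhedron once you have prescribed an admissible assignment of dihedral angles (which your proposal never actually specifies), and it says nothing about the ``one-click'' face identifications being isometries of the realised faces or about the glued-up structure being the complete hyperbolic metric. In general the Menasco polyhedra with these angle assignments do \emph{not} fit together in $\H^3$, and the paper says so explicitly.

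The paper's proof is designed precisely to avoid needing a geometric decomposition. It assigns interior dihedral angle $\pi/2$ to every edge of both polyhedra; since every edge class has valence $4$, the angles sum to $2\pi$ around each edge. Rivin's condition \ref{rivin_cond1} is trivial, condition \ref{rivin_cond2} holds because every vertex of $G$ is $4$-valent, and condition \ref{rivin_cond3} is exactly where hypothesis \ref{alt_link_hyp2} enters (a curve meeting $G$ four times gives angle sum exactly $2\pi$, so such curves must bound a single vertex, i.e.\ be boundaries of $2$-cells of the dual). This realises each polyhedron abstractly, and then coning gives tetrahedra with strictly positive angles whose sums around all edges are correct --- a strict angle structure --- without ever knowing that the two hyperbolic polyhedra glue isometrically. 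Your proposal both fails to justify the stronger geometric claim and, as a consequence, overclaims at the end: the conclusion that $\tri$ is a geometric ideal triangulation is not available (and the paper's Example \ref{strict angle struct not geometric}, built from exactly this kind of construction, shows the output need not be geometric). If you replace your middle step by the all-right-angles assignment and the verification of Rivin's conditions from hypotheses \ref{alt_link_hyp1} and \ref{alt_link_hyp2}, and weaken your conclusion to the existence of a strict angle structure, your argument becomes the paper's proof.
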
 
\begin{ex}
The Turk's head knot has a reduced alternating diagram with these properties. See Figure \ref{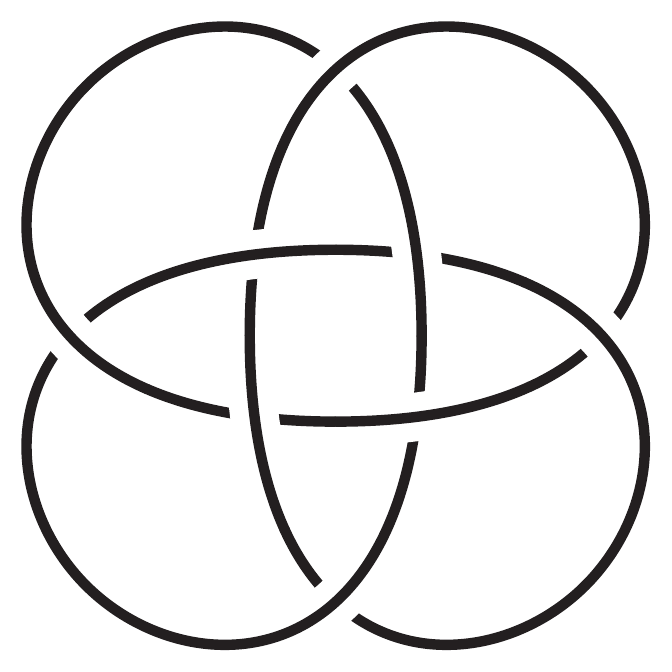}.
\end{ex}
\begin{figure}[htb]
\centering
\includegraphics[width=0.4\textwidth]{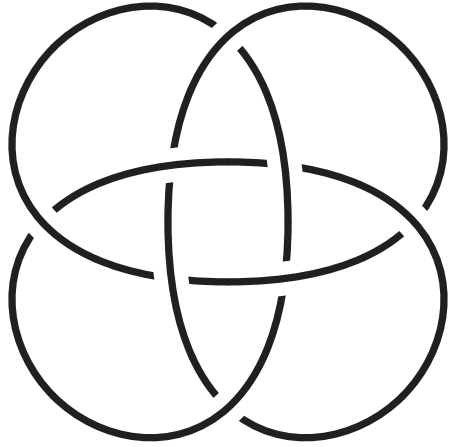}
\caption{The Turk's head knot, also known as $8_{18}$.}
\label{turks_head_knot.pdf}
\end{figure}

\begin{proof}[of Theorem \ref{alternating link}]
Consider the decomposition $\mathcal{P}$ of $S^3\setminus L$ into two polyhedra, as first studied by Thurston \cite{thurston}. Briefly, a reduced alternating, prime link complement can be decomposed into two polyhedra, one on either side of the 2-sphere containing the link projection.
See, for example, \cite{aitchison_lumsden_rubinstein} for details.\\

The resulting decomposition $\mathcal{P}$ consists of two polyhedra $P_1$ and $P_2$, each of which has boundary pattern given by $G \subset S^2$. Each face of $P_1$ is glued to the corresponding face of $P_2$, rotated ``one click'' either clockwise or anticlockwise. The direction of rotation alternates according to a checkerboard colouring of the complement of $G$ in $S^2$. Each edge of $\mathcal{P}$ is incident to $P_1$ twice and $P_2$ twice.\\

The first step is to find a set of dihedral angles for the two polyhedra so that the polyhedra can be realised as convex ideal hyperbolic polyhedra, and that the dihedral angles add to $2\pi$ around each edge. Note that the ideal hyperbolic polyhedra need not fit together nicely in $\mathbb{H}^3$.\\

We choose all of our dihedral angles to be $\pi/2$. Since all of the edge valences are four, the (interior) dihedral angles add up to $2\pi$. We next use the following theorem of Rivin \cite{rivin_ideal_hyp_polyhedra} (see also \cite{rivin_comb_opt,gueritaud_char_polyhedra}). 

\begin{thm}[(Rivin)]\label{rivin_andreev}
A combinatorial polyhedron $P$ can be realised as a convex ideal hyperbolic polyhedron with prescribed exterior angles assigned to the edges of $P$ if and only if
\begin{enumerate}
\item \label{rivin_cond1} Each exterior angle is in $(0,\pi)$. 
\item \label{rivin_cond2}  For each vertex $v$ of $P$, the sum of exterior dihedral angles for edges incident to $v$ is $2\pi$.
\item \label{rivin_cond3}  For each simple closed curve $C$ in the dual 2-complex $P^*$ that is not the boundary of a 2-cell of $P^*$, the sum of exterior dihedral angles for edges crossed by $C$ is strictly greater than $2\pi$. 
\end{enumerate}
\end{thm}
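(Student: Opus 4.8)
I would prove the two implications separately; necessity is a short Gauss--Bonnet computation, while sufficiency --- the substance of the theorem --- reduces to a rigidity/convexity statement together with a compactness analysis. For necessity, suppose $P$ is realised as a convex ideal polyhedron with exterior angles $w$. The condition $w(e)\in(0,\pi)$ is immediate: a convex polyhedron has interior dihedral angles in $(0,\pi]$, the value $\pi$ is excluded because the realisation is faithful (adjacent faces across a genuine edge are not coplanar), and $0$ is excluded by nondegeneracy. For the vertex condition, a small horosphere about an ideal vertex $v$ cuts $P$ in a Euclidean polygon whose interior angles are the interior dihedral angles $\pi-w(e)$ at the edges $e$ incident to $v$; since the interior angles of a Euclidean $m$-gon sum to $(m-2)\pi$, this gives $\sum_{e\ni v}w(e)=2\pi$. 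For the curve condition, truncate $P$ at all of its ideal vertices by small disjoint horoballs so that it becomes compact, and apply Gauss--Bonnet to the disk of faces on one side of $C$: convexity of $P$ forces the exterior angles along the cycle $C$ to total strictly more than $2\pi$, with equality occurring only in the degenerate case that the disk is the star of a single vertex, i.e.\ only when $C$ bounds a $2$-cell of $P^\ast$.

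\emph{Sufficiency, deformation route.} The set $A_P$ of assignments satisfying all three conditions is the intersection of the open cube $(0,\pi)^{E(P)}$ with the affine subspace cut out by the vertex equalities and the open half-spaces cut out by the curve inequalities, hence is convex and open; assume it is nonempty. Let $\mathcal{D}_P$ be the space of convex ideal polyhedra of combinatorial type $P$ modulo isometry; this is a smooth manifold, and by necessity the dihedral-angle map $\Phi\co\mathcal{D}_P\to(0,\pi)^{E(P)}$ takes values in $A_P$. I would then show, first, that $\Phi$ is a local diffeomorphism --- equivalently, that ideal polyhedra are infinitesimally rigid relative to their dihedral angles, so any infinitesimal flex fixing every $w(e)$ is trivial; and second, that $\Phi$ is proper --- a sequence of polyhedra whose angles converge to a point of $A_P$ cannot degenerate, because each possible degeneration (two ideal vertices colliding, an edge length blowing up, a face collapsing) would in the limit violate the vertex equality at some vertex or the strict curve inequality across some cycle of $P^\ast$. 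A proper local diffeomorphism onto the connected set $A_P$ is a covering map, hence (once $\mathcal{D}_P$ is known to be nonempty) surjective, which gives existence; a global Cauchy-type rigidity argument, or connectedness of $\mathcal{D}_P$, then shows the covering has a single sheet, which together with properness upgrades $\Phi$ to a homeomorphism, so the realising polyhedron is unique and depends continuously on $w$.

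\emph{Sufficiency, variational/dual route.} Alternatively --- and this is how I would supply the nonemptiness of $\mathcal{D}_P$ used above --- fix a target $w^\ast\in A_P$ and realise the polyhedron as the unique maximiser of a concave functional. One parametrises candidate configurations by suitable geometric data (say the positions of the prospective ideal vertices on $\partial\mathbb{H}^3$, or, dually, the support numbers of the polar convex body in de Sitter space) and takes $\mathcal{F}$ to be essentially the hyperbolic volume, reorganised via the Schl\"afli formula so that $\nabla\mathcal{F}=0$ asserts exactly that the realised dihedral angles equal $w^\ast$. The facts to establish are that $\mathcal{F}$ is strictly concave on its convex domain (a Hessian positivity computation), so it has at most one critical point, and that the three admissibility conditions on $w^\ast$ are precisely what make $\mathcal{F}$ coercive, forcing the supremum to be attained in the interior; the maximiser is then the desired polyhedron. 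Phrased on the dual side, this is an Alexandrov/Minkowski-type realisation theorem for convex polyhedra in de Sitter space, driven by the same strict-concavity and coercivity statements.

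\emph{The main obstacle.} In either route the hard part is the rigidity/convexity core --- infinitesimal rigidity of ideal polyhedra in the deformation route, strict concavity of the volume functional in the variational route --- together with the compactness analysis showing that every degeneration of the polyhedron is obstructed by exactly one of the vertex equalities or, crucially, one of the strict inequalities of the curve condition. The remaining ingredients (convexity and connectedness of $A_P$, the covering-space bookkeeping, necessity) are routine by comparison.
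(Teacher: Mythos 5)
The paper gives no proof of this statement: it is quoted as Rivin's theorem, with references to Rivin's Annals paper and to Gu\'eritaud's elementary proof, and only the ``if'' direction is actually used later. So there is nothing internal to compare you against, and your text has to stand on its own; as written it is a roadmap rather than a proof. Your two sufficiency routes do correspond to the known strategies (the deformation/properness argument in the spirit of Alexandrov's method of continuity and Rivin--Hodgson, and the variational route in which the Schl\"afli formula turns criticality of a concave volume functional into the prescribed-angle condition, which is essentially Rivin's and Gu\'eritaud's approach). But everything you flag as ``the main obstacle'' is precisely the content of the theorem: infinitesimal rigidity of ideal polyhedra with respect to their dihedral angles, properness of the angle map (the case analysis showing each degeneration violates condition (2) or a strict inequality in (3)), or strict concavity plus coercivity of the volume functional, together with non-emptiness and connectedness of $\mathcal{D}_P$ needed to run the covering-space argument. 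None of these is established, or even reduced to a precise lemma, so the proposal defers rather than supplies the substance of the theorem.

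There is also one step that fails as stated: your necessity argument for condition (3). The exterior dihedral angles are extrinsic quantities; the induced path metric on the boundary of the (truncated) polyhedron is smooth across edges, so Gauss--Bonnet applied to the disk of faces on one side of $C$ sees the intrinsic curvature of the faces, the angles of the truncation polygons and the geodesic curvature of the boundary curve, but not the numbers $w(e)$ at all. The standard proof instead passes to the Gauss map (polar dual in de Sitter space): the curve $C$ traces a closed space-like polygon whose edge lengths are the exterior angles $w(e_i)$, and one invokes the non-trivial lemma of Rivin--Hodgson type that such a closed convex space-like curve has length strictly greater than $2\pi$ unless the faces visited by $C$ all share an ideal vertex, i.e.\ unless $C$ bounds a $2$-cell of $P^*$ (equality being exactly your condition (2)). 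That lemma, or Rivin's equivalent statement about the dual spherical/cone metric, is a genuine ingredient that must be proved or cited; your horosphere argument for condition (2), by contrast, is fine.
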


Our assignment of $\pi/2$ for each dihedral angle satisfies condition \ref{rivin_cond1} trivially, condition \ref{rivin_cond2} since each vertex of our polyhedra is valence 4, and condition \ref{rivin_cond3} by hypothesis \ref{alt_link_hyp2} of the statement of our theorem. Thus we can realise the polyhedra as (identical) convex ideal hyperbolic polyhedra. 
As in Remark \ref{rmk: natural angle structure}, any coning of the polyhedra will give a natural strict angle structure for the tetrahedra inside each polyhedron.\\

In the case given by hypothesis \ref{alt_link_hyp3} of the theorem, we can choose our cone vertices far enough apart from each other on $G$ so that the faces of $P_1$ and $P_2$ whose triangulations are determined by our conings are never glued to each other. Thus we can triangulate the pyramids of the two conings in such a way that they match up on the boundary of $P_1$ and $P_2$, and so that no flat tetrahedra need be inserted to bridge between the triangulations on each side. Then the natural strict angle structure on the tetrahedra of the polyhedra also gives us a strict angle structure on our constructed triangulation of the link complement.\end{proof}

\begin{rmk}
A similar argument also works for {\bf balanced} reduced alternating link diagrams. As defined in \cite{aitchison_lumsden_rubinstein}, these are such that every crossing point is a vertex of exactly one bigon. To obtain the two polyhedra, all of the bigons are collapsed. Every vertex of the resulting polyhedra has degree 3, and every edge in the cellulation has valence 6, so if we choose internal dihedral angles of $\pi/3$ then the above argument goes through.
\end{rmk}

\begin{rmk}
A beautiful class of examples is given by Aitchison-Reeves \cite{aitchison_reeves}. For the case of alternating links where the polyhedra can be given special regular hyperbolic structures, then the hyperbolic structure on the link complement can be seen by gluing two congruent copies of the same ideal hyperbolic polyhedron. These correspond to the Archimedean polyhedra. 
\end{rmk}

\section{Layered triangulations of polygonal pillows}\label{sec: layered triangs}

Given a PPP-cellulation of a 3-manifold $M$, we now specify a layered triangulation for each of our polygonal pillows, assuming that we subdivide each of our polyhedra by coning from some vertex of each, then choosing a triangulation for the bases of the resulting pyramids and subdividing the pyramids appropriately. (We will also say something about more general subdivisions in Section \ref{generalisations}.)\\

Given a cellulation of $M$ into polyhedra $\mathcal{P}$, the complex which is the union of the boundaries of the polyhedra is $\mathcal{P}^{(2)}$. After coning our polyhedra but before subdividing the pyramids, we identify two types of 2-cells in $\mathcal{P}^{(2)}$:
\begin{enumerate}
\item \label{2cell_type1} The triangulations of the faces of the polyhedra identified at the 2-cell are both specified by the coning process and they disagree. 
\item \label{2cell_type2} The triangulations of the faces of the polyhedra identified at the 2-cell are both specified by the coning process and they agree, or one or both is not specified.
\end{enumerate}

For the latter type of 2-cell, we can always choose the triangulation of any pyramids that are involved so that the triangulations of the polyhedra match at this pair of faces. Thus,
as in Remark \ref{polyhedron triang dets PPP}, we have an intermediate PPP-cellulation which has polygonal pillows only at the 2-cells of the first type. Now we need to specify a layered triangulation for each of these polygonal pillows. However, we see that the triangulations on either side of the polygonal pillow are of a very simple form: the polygons are triangulated in a coned fashion, with all edges that are internal to the polygon incident to a single cone vertex.\\

\begin{algor}(Layered triangulation between two cone triangulations of a polygon)\\

\label{layered triang cone triangulations}
\noindent {\bf Input:} two cone triangulations of a polygon.\\
 {\bf Output:} a layered triangulation of the polygonal pillow with the two given cone triangulations on each side.
\begin{enumerate}
\item If the two cone vertices are the same vertex of the polygon (or also if they are opposite vertices in the case that the polygon has 4 sides) then the two triangulations actually agree, and we are in case \ref{2cell_type2} above: there is no layered triangulation to make. So, assume that the two cone vertices are different. 
\item If the cone vertices are at distance greater than one, counting edges around the polygon, then the two triangulations share an internal edge: the edge $e$ between the two cone vertices. We will not change $e$, so our problem reduces to finding a layered triangulation for each of the two polygons obtained by cutting the original polygon at $e$. Each such polygon is either a triangle, which therefore needs no layered triangulation, or has cone triangulations on either side of the polygonal pillow, where the cone vertices are at distance one. 
\item Finally, suppose that the two cone vertices are at distance one from each other. Label the vertices of our polygon $v_1, v_2, \ldots, v_n$, in a cyclic order and suppose that we want to provide a sequence of diagonal flips changing the triangulation coned from $v_n$ into the triangulation coned from $v_1$. The sequence we use is to replace the edge $(v_n, v_i)$ with the edge $(v_1, v_{i+1})$, where $i$ runs from $2$ to $n-2$ in sequence. See Figure~\ref{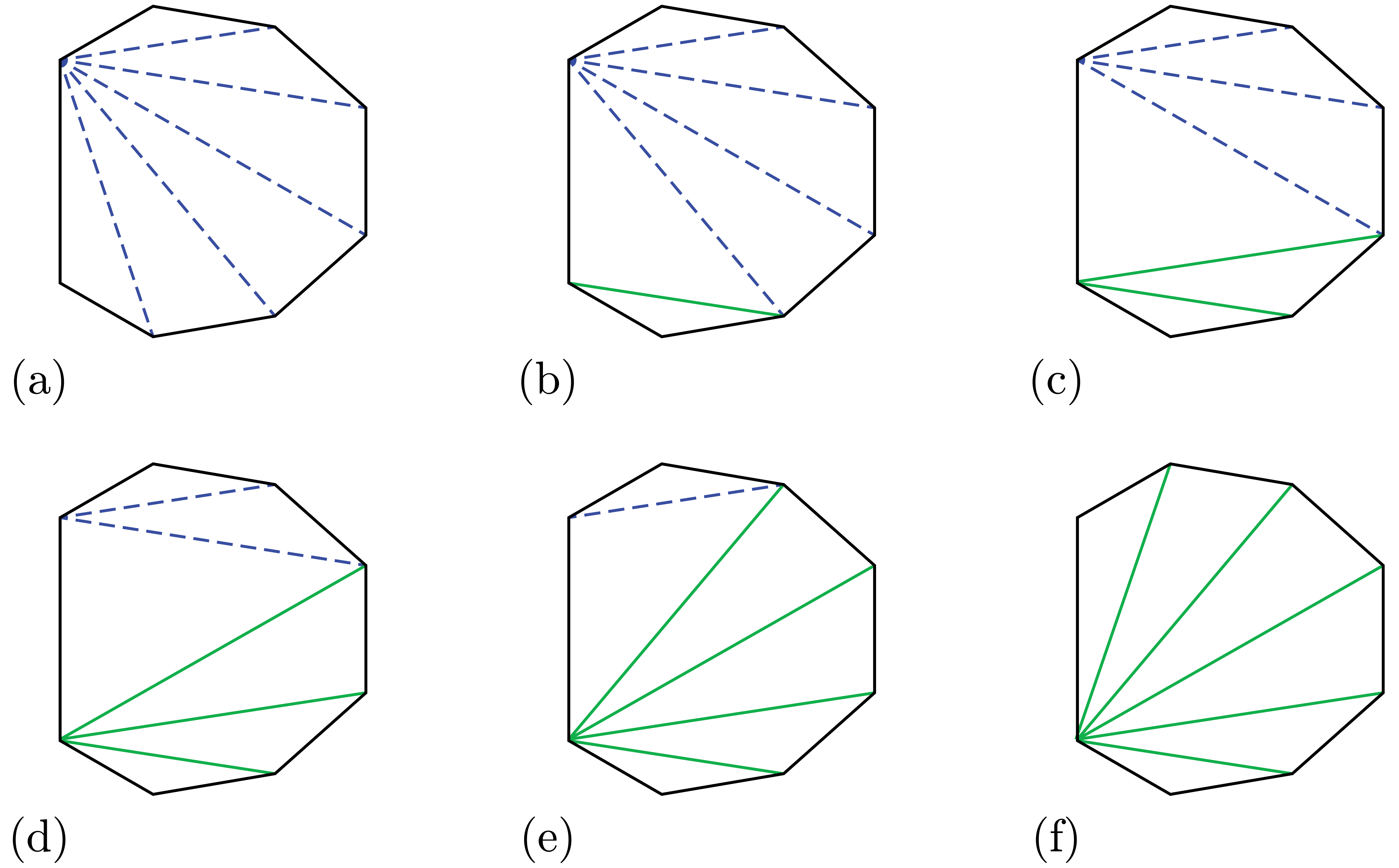}.
\end{enumerate}
\end{algor}

\begin{figure}[htb]
\centering
\includegraphics[width=\textwidth]{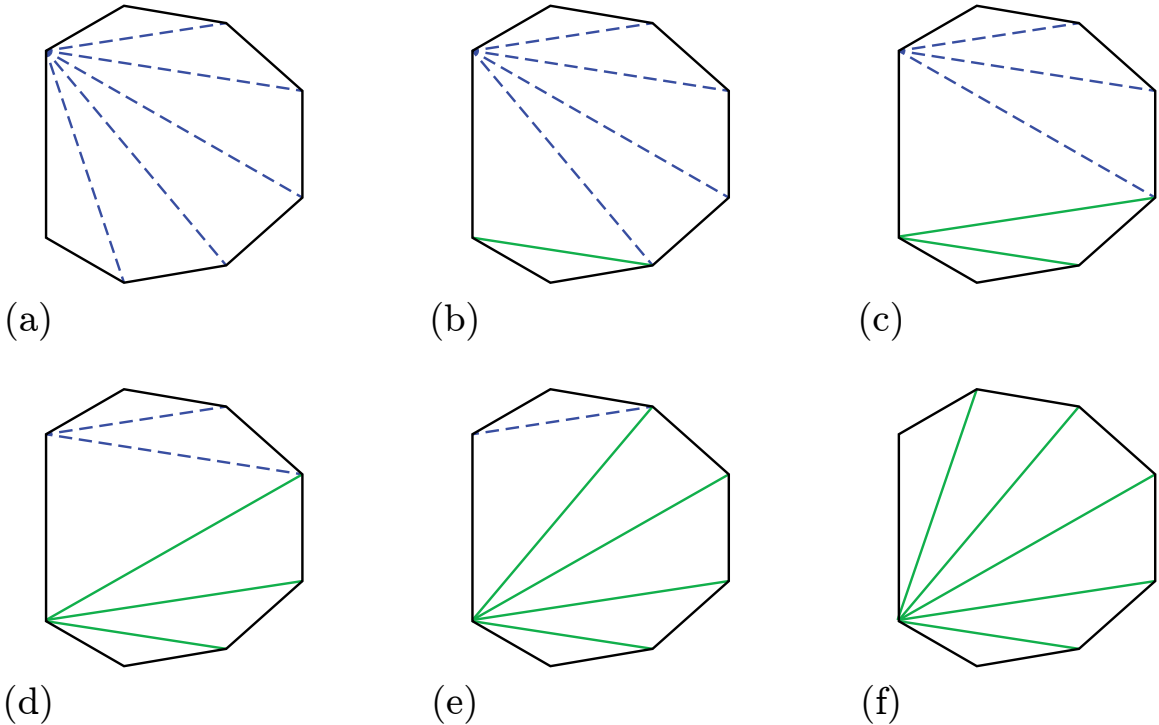}
\caption{Layered triangulation between two cone triangulations with cone vertices at distance one from each other.}
\label{retri_polygon_2_cones.pdf}
\end{figure}

We can then summarise the discussion of this section into a construction of a triangulation obtained by starting from a geometric polyhedral decomposition:

\begin{algor}(Triangulation from a geometric polyhedral decomposition)\\

\label{polyhedra cone vertices to triangulation}
\noindent {\bf Input:} A geometric polyhedral decomposition $\mathcal{P}$ of a 3-manifold $M$, with a choice of cone vertex $v_i$ for each polyhedron $P_i$ in $\mathcal{P}$.\\
 {\bf Output:} A PPP-cellulation $\mathcal{P}'$ of $M$ with the same polyhedra as $\mathcal{P}$, and a triangulation $\tri$ of $M$ consisting of a triangulation of each polyhedron of $\mathcal{P}'$ together with a layered triangulation of each polygonal pillow of $\mathcal{P}'$.
\begin{enumerate}
\item Cone each of the polyhedra of $\mathcal{P}$ as specified by the cone vertices. 
\item For each 2-cell of $\mathcal{P}^{(2)}$ for which the cellulation of the 2-cell induced by the conings of the polyhedra on either side disagree, insert a polygonal pillow. The resulting cellulation is $\mathcal{P}'$.
\item \label{step: ppp to triang} Triangulate each polygonal pillow using Algorithm \ref{layered triang cone triangulations}. Triangulate each pyramid within each polyhedron by choosing a triangulation of its base to match with the pattern on the corresponding face of the polyhedron glued to the other side of the base of the pyramid (this may involve a choice if the bases of two pyramids meet at a face, choose arbitrarily if so). The resulting triangulation is $\tri$.
\end{enumerate}
\end{algor}

\section{Dual normal classes}\label{sec: dual normal classes}

The main tool we will use to show the existence of strict angle structures is a result of Luo and Tillmann~\cite{luo_tillmann_2008}, based on work of Kang and Rubinstein~\cite{kang_rubinstein_2005}. This result links the existence of angle structures to 
normal surface theory 
using duality principles from linear programming. 
Given a $3$-manifold $M$ with an ideal triangulation $\tri$ as in Section \ref{sec:ideal triang},
the {\bf normal surface solution space}  $C(M; \tri)$ is a vector subspace of $\mathbb{R}^{7n},$ where $n$ is the number of tetrahedra in $\tri$, consisting of vectors  satisfying the {\bf compatibility equations} of normal surface theory. The coordinates of $x \in \mathbb{R}^{7n}$ represent weights of the four normal triangle types and the three normal quadrilateral types in each tetrahedron, and the compatibility equations state that normal triangles and quadrilaterals have to meet the 2--simplices of $\tri$ with compatible weights. \\

A vector in $\mathbb{R}^{7n}$ is called {\bf admissible} if at most one quadrilateral coordinate from each tetrahedron is non-zero and all coordinates are non-negative. An integral admissible element of $C(M; \tri)$ corresponds to a unique embedded, closed normal surface in $(M,\tri)$ and vice versa. As a reference for other facts from normal surface theory, please consult~\cite{jaco_oertel}.\\

There is a linear function $\chi^*\co C(M; \tri) \to \mathbb{R},$ which agrees with the Euler characteristic $\chi$ on embedded and immersed normal surfaces. If $\tri$ admits a generalised angle structure $\alpha$, then the formal Euler characteristic $\chi^*$ can be computed by
\begin{equation}\label{chi*}
2\pi \chi^*(x)= \sum_q -2\alpha(q) x_q,\end{equation}
where $x_q$ is the normal coordinate of the normal quadrilateral type $q$.\\

\begin{thm}[(Theorem 3 of \cite{luo_tillmann_2008})]\label{thm: duality}
Let $M$ be the interior of a compact 3-manifold with non-empty boundary and ideal triangulation $\tri$. Assume that each boundary component is a torus or a Klein bottle. Then the following are equivalent:
\begin{enumerate}
\item $(M;\tri)$ admits a strict angle structure.
\item for all $x\in C(M;\tri)$ with all quadrilateral coordinates nonÐnegative and at
least one quadrilateral coordinate positive, $\chi^*(x) < 0$.
\end{enumerate}
\end{thm}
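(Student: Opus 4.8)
The plan is to deduce the equivalence from linear-programming duality (a separating-hyperplane argument) applied to the affine system of (in)equalities that a strict angle structure must satisfy. Index $\mathbb{R}^{3n}$ by the set $\square$ of normal quadrilateral types, $n=|\tri^{(3)}|$, and let $\mathcal{A}\subseteq\mathbb{R}^{3n}$ be the affine subspace of generalised angle structures cut out by the tetrahedron and edge equations of Definition~\ref{def:gen angle}, with $\mathcal{A}_0$ the linear space of solutions of the corresponding homogeneous equations (so $\mathcal{A}_0^\perp$ is the span of all the tetrahedron- and edge-equation vectors). A strict angle structure is exactly a point of $\mathcal{A}$ with all coordinates positive --- the upper bound $\alpha(q)<\pi$ is automatic once three positive angles in a tetrahedron sum to $\pi$. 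I will use the fact that $\mathcal{A}\neq\emptyset$ for every ideal triangulation of a 3-manifold with torus or Klein bottle boundary (Luo--Tillmann~\cite{luo_tillmann_2008}, Kang--Rubinstein~\cite{kang_rubinstein_2005}), fix a generalised angle structure $\alpha_0$, and read~(\ref{chi*}) as $2\pi\chi^*(x)=-2\langle\alpha_0,\pi_Q(x)\rangle$, where $\pi_Q\colon\mathbb{R}^{7n}\to\mathbb{R}^{3n}$ is the projection onto quadrilateral coordinates.

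The implication $(1)\Rightarrow(2)$ is immediate: if $\alpha$ is a strict angle structure and $x\in C(M;\tri)$ has all quadrilateral coordinates $\ge 0$ with at least one positive, then $\langle\alpha,\pi_Q(x)\rangle=\sum_q\alpha(q)\,x_q$ is a sum of non-negative terms with at least one strictly positive, so $\chi^*(x)=-\tfrac1\pi\langle\alpha,\pi_Q(x)\rangle<0$.

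For $(2)\Rightarrow(1)$ I argue by contrapositive. Suppose no strict angle structure exists; then the (nonempty) affine set $\mathcal{A}$ is disjoint from the open positive orthant $U$ of $\mathbb{R}^{3n}$, so there is a nonzero $v\in\mathbb{R}^{3n}$ and a constant $c$ with $\langle v,\alpha\rangle\le c\le\langle v,y\rangle$ for all $\alpha\in\mathcal{A}$, $y\in U$. Finiteness of $\sup_{\alpha\in\mathcal{A}}\langle v,\alpha\rangle$ forces $v\perp\mathcal{A}_0$, so $\langle v,\alpha\rangle\equiv\langle v,\alpha_0\rangle$ on $\mathcal{A}$; and $\langle v,y\rangle\ge c$ for all $y\in U$ forces every coordinate of $v$ to be $\ge 0$, with $\inf_{y\in U}\langle v,y\rangle=0$, hence $\langle v,\alpha_0\rangle\le 0$. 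Thus $v$ is non-negative, nonzero, lies in $\mathcal{A}_0^\perp$, and pairs non-positively with $\alpha_0$. The remaining ingredient --- the \emph{duality lemma} --- is that $\mathcal{A}_0^\perp=\pi_Q\big(C(M;\tri)\big)$; granting it, pick $x\in C(M;\tri)$ with $\pi_Q(x)=v$, so $x$ has non-negative quadrilateral coordinates, not all zero, and $\chi^*(x)=-\tfrac1\pi\langle\alpha_0,v\rangle\ge 0$, contradicting~(2).

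The step I expect to be the real obstacle is the duality lemma $\mathcal{A}_0^\perp=\pi_Q(C(M;\tri))$. By Tollefson's $Q$-theory, $\pi_Q(C(M;\tri))$ is the solution space of the $Q$-matching equations, so if $B$ is the $Q$-matching matrix and $A$ the matrix whose rows are the tetrahedron- and edge-equation vectors, the lemma is the statement $\operatorname{rowspace}(A)=\ker(B)$. One inclusion amounts to $BA^T=0$: within each tetrahedron block a $Q$-matching vector has the shape $(+1,-1,0)$ (with possible repetition), which pairs to $0$ with the tetrahedron vector $(1,1,1)$, while orthogonality of a $Q$-matching equation to an edge equation is the genuinely delicate point and is where one must carefully track how the quadrilateral types facing an edge are arranged cyclically around it (this is Kang--Rubinstein~\cite{kang_rubinstein_2005}); the opposite inclusion then follows from the dimension count $\operatorname{rank}(A)+\operatorname{rank}(B)=3n$. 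The non-orientable case (Klein bottle boundary) changes only the bookkeeping, since everything here is linear-algebraic and combinatorial.
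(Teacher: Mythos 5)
The paper does not prove Theorem~\ref{thm: duality} at all --- it is quoted from \cite{luo_tillmann_2008} --- and your linear-programming skeleton is in fact the same duality strategy used there: your direction $(1)\Rightarrow(2)$, the separation argument, and the reduction to the statement $\mathcal{A}_0^\perp=\pi_Q\bigl(C(M;\tri)\bigr)$ are all sound. The genuine gap is your justification of that key lemma. Tollefson's identification of the projection of the normal solution space with the solution space of the $Q$-matching equations is a statement about \emph{closed} triangulations; for an ideal triangulation whose vertex links are tori or Klein bottles, solutions of the $Q$-matching equations are the quadrilateral coordinate vectors of \emph{spun}-normal classes, and only those with vanishing boundary (spinning) invariants come from elements of $C(M;\tri)$, which have finite triangle coordinates. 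So in general $\pi_Q\bigl(C(M;\tri)\bigr)\subsetneq\ker(B)$, and the identity $\operatorname{rowspace}(A)=\ker(B)$ you aim for is false. Your dimension count fails for an elementary reason: each quadrilateral type faces exactly two edges (counted with multiplicity), so the sum of all edge rows of $A$ equals twice the sum of all tetrahedron rows; hence $\operatorname{rank}(A)\le 2n-1$, while $\operatorname{rank}(B)\le n$, so $\operatorname{rank}(A)+\operatorname{rank}(B)\le 3n-1<3n$ always. Equivalently, $\operatorname{rowspace}(A)=\ker(B)$ would force $\operatorname{rank}(A)\ge 2n$, which never holds.

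The lemma $\mathcal{A}_0^\perp=\pi_Q\bigl(C(M;\tri)\bigr)$ is nevertheless true, and proving it is the real content of the Luo--Tillmann argument. The inclusion $\pi_Q\bigl(C(M;\tri)\bigr)\subseteq\mathcal{A}_0^\perp$ follows, as you implicitly use, from the fact that formula~(\ref{chi*}) holds for \emph{every} generalised angle structure, so differences of generalised angle structures pair to zero with quadrilateral parts of normal classes. For the reverse inclusion $\operatorname{rowspace}(A)\subseteq\pi_Q\bigl(C(M;\tri)\bigr)$ you should not invoke \cite{tollefson} but instead exhibit, for each row of $A$, an explicit element of $C(M;\tri)$ with that quadrilateral part: for a tetrahedron $\sigma$, take one copy of each of its three quadrilateral types together with coefficient $-1$ on each of its four triangle types (the induced arc counts on every face of $\sigma$ cancel, so this extends by zero to a solution of the compatibility equations); for an edge $e$, take one copy of the quadrilateral facing $e$ for each appearance of a tetrahedron around $e$, capped off by the vertex-linking triangles at the two endpoints of $e$ in those tetrahedra --- the normal class of the boundary of a regular neighbourhood of $e$, whose boundary curves are null-homotopic loops in the vertex links (this is where the torus or Klein bottle hypothesis enters, via the existence of $\alpha_0$ from \cite{kang_rubinstein_2005,luo_tillmann_2008} rather than via any rank identity). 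With these constructions supplying the missing inclusion, your separation argument goes through as written.
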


As in Remark \ref{rmk: natural angle structure}, for a triangulation $\tri$ that is given by triangulating the polyhedra and polygonal pillows of a PPP-cellulation, where the PPP-cellulation comes from a geometric polyhedral decomposition, we have a natural semi-angle structure on $\tri$. 

\begin{defn}
A quadrilateral type $q$ in a tetrahedron is said to be {\bf vertical} (relative to a given semi-angle structure $\alpha$) if $\alpha(q) = 0$.
\end{defn}

\begin{cor}\label{cor: sas iff no vert classes}
The triangulation $\tri$ admits a strict angle structure if and only if there is no $x\in C(M;\tri)$ with all quadrilateral coordinates non-negative, all non-vertical quadrilateral coordinates zero and at
least one quadrilateral coordinate positive.
\end{cor}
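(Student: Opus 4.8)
The plan is to derive this directly from Theorem~\ref{thm: duality}, pairing it with the natural semi-angle structure on $\tri$ and the Euler characteristic formula \eqref{chi*}. Write $\alpha_0$ for that natural semi-angle structure, built from the ideal hyperbolic shapes of the tetrahedra inside the polyhedra together with angles $0,0,\pi$ on each tetrahedron inside a polygonal pillow, as in Remark~\ref{rmk: natural angle structure}. The first step is to observe that $\alpha_0$ is in particular a generalised angle structure, so \eqref{chi*} applies with $\alpha=\alpha_0$:
\[
2\pi\,\chi^*(x) \;=\; -2\sum_q \alpha_0(q)\,x_q \qquad\text{for all } x\in C(M;\tri).
\]
The crucial elementary step is the sign analysis of this identity: if $x$ has all quadrilateral coordinates non-negative, then every term $\alpha_0(q)\,x_q$ is non-negative, so $\chi^*(x)\le 0$, with equality if and only if $\alpha_0(q)\,x_q=0$ for every quadrilateral type $q$ — which, by the definition of \emph{vertical}, says exactly that every non-vertical quadrilateral coordinate of $x$ vanishes.

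Given this, both implications are short. For the forward direction I would assume $\tri$ admits a strict angle structure and argue by contradiction: were there an $x\in C(M;\tri)$ with all quadrilateral coordinates non-negative, all non-vertical quadrilateral coordinates zero, and at least one quadrilateral coordinate positive, the observation above forces $\chi^*(x)=0$, violating condition (2) of Theorem~\ref{thm: duality}. For the converse I would assume no such $x$ exists, take an arbitrary $x\in C(M;\tri)$ with all quadrilateral coordinates non-negative and at least one positive, and check $\chi^*(x)<0$ (which by Theorem~\ref{thm: duality} gives a strict angle structure). We already have $\chi^*(x)\le 0$; if $\chi^*(x)=0$ then $\sum_q\alpha_0(q)\,x_q=0$, and since each summand is non-negative each vanishes, so all non-vertical quadrilateral coordinates of $x$ are zero — making $x$ an element of precisely the forbidden kind, a contradiction.

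I do not anticipate a genuine obstacle here: the whole argument reduces to the sign of the right-hand side of \eqref{chi*}. The one point that deserves care is confirming that $\alpha_0$ really is a (generalised) angle structure, so that \eqref{chi*} may legitimately be invoked; this is exactly the content of Remark~\ref{rmk: natural angle structure}, valid because $\tri$ is obtained by triangulating the polyhedra and polygonal pillows of a PPP-cellulation arising from a geometric polyhedral decomposition.
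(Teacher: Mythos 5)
Your proposal is correct and follows essentially the same route as the paper's own proof: both combine Theorem~\ref{thm: duality} with equation~(\ref{chi*}) evaluated on the natural semi-angle structure of Remark~\ref{rmk: natural angle structure}, and the key step in each is the sign observation that a positive coordinate on a non-vertical quadrilateral forces $\chi^*(x)<0$, so only classes supported on vertical quadrilaterals can obstruct condition~(2) of the theorem. Your write-up merely makes both implications explicit where the paper states them tersely.
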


\begin{proof}
This follows by combining Theorem \ref{thm: duality} with equation (\ref{chi*}). From the equation, if there is a quadrilateral type $q\in\square$
 such that $x_q>0$ and $\alpha(q)>0$, then $\chi^*(x) < 0$. Thus, we only have to worry about normal classes with no horizontal quadrilaterals.
\end{proof}

\begin{defn}
Let $\mathcal{N}(M;\tri) \subset C(M;\tri)$ be the subset of solutions to the normal surface compatibility equations such that every coordinate is a non-negative integer.
We refer to elements of $\mathcal{N}(M;\tri)$ as  \textbf{normal classes}.
\end{defn}

\begin{cor}\label{cor: sas iff no vert}
The triangulation $\tri$ admits a strict angle structure if and only if there is no $S\in \mathcal{N}(M;\tri)$ with all non-vertical quadrilateral coordinates zero and at least one quadrilateral coordinate positive.
\end{cor}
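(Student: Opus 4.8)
The plan is to derive this from Corollary \ref{cor: sas iff no vert classes} by a standard integralization argument. One direction is immediate: since $\mathcal{N}(M;\tri) \subset C(M;\tri)$ and every coordinate of an element of $\mathcal{N}(M;\tri)$ is non-negative, any $S \in \mathcal{N}(M;\tri)$ with all non-vertical quadrilateral coordinates zero and some quadrilateral coordinate positive is in particular a vector of $C(M;\tri)$ of exactly the kind that Corollary \ref{cor: sas iff no vert classes} forbids when a strict angle structure exists. So the existence of a strict angle structure rules out such an $S$.

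For the converse I would argue contrapositively, producing $S$ under the assumption that $\tri$ has no strict angle structure. By Corollary \ref{cor: sas iff no vert classes} there is $x \in C(M;\tri)$ with all quadrilateral coordinates non-negative, all non-vertical quadrilateral coordinates zero, and $x_{q_0} > 0$ for some (necessarily vertical) quadrilateral type $q_0$. The two gaps between this $x$ and a genuine normal class are that its triangle coordinates may be negative and that it need not be rational. To close the first gap I would add peripheral normal classes: for each boundary component $c$ of $\overline{M}$, let $T_c \in \mathcal{N}(M;\tri)$ be the normal class consisting of the normal triangles linking $c$, so that $T_c$ has all quadrilateral coordinates $0$ and triangle coordinates in $\{0,1\}$; since every normal triangle links exactly one boundary component, $\sum_c T_c$ has every triangle coordinate equal to $1$. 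Hence for a sufficiently large positive real $N$, the vector $x' := x + N\sum_c T_c$ lies in $C(M;\tri)$, has the same quadrilateral coordinates as $x$ (so $x'_{q_0} > 0$ and all non-vertical quadrilateral coordinates of $x'$ vanish), and has all triangle coordinates non-negative.

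To close the rationality gap, I would use the fact that the set $K$ of $y \in C(M;\tri)$ with all coordinates $\geq 0$ and all non-vertical quadrilateral coordinates $0$ is a polyhedral cone cut out by linear (in)equalities with integer coefficients, hence by Minkowski--Weyl is generated by finitely many rays, which after scaling may be taken to be integral vectors $r_1,\dots,r_s \in K$. Each $r_j$ has non-negative integer coordinates and satisfies the compatibility equations, so $r_j \in \mathcal{N}(M;\tri)$, and each has all non-vertical quadrilateral coordinates zero. Writing $x' = \sum_j \lambda_j r_j$ with $\lambda_j \geq 0$, the inequality $x'_{q_0} > 0$ forces $(r_j)_{q_0} > 0$ for some $j$, and then $S := r_j$ is the required normal class. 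The main obstacle --- essentially the only step with content beyond Corollary \ref{cor: sas iff no vert classes} --- is this bookkeeping: verifying that adding the vertex-linking classes cures the signs of the triangle coordinates without disturbing the quadrilateral coordinates or the compatibility equations, and that the relevant cone is rational so that an integral witness genuinely exists.
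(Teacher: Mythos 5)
Your proposal is correct and follows essentially the same route as the paper: both directions reduce to Corollary \ref{cor: sas iff no vert classes}, with non-negativity of the triangle coordinates repaired by adding copies of the vertex-linking (peripheral) surfaces and integrality obtained from the rationality of the defining linear system. Your Minkowski--Weyl cone-generator argument is just a more explicit version of the paper's one-line appeal to rational (hence integer) solutions of a homogeneous integer system, and it cleanly preserves the strict inequality $x_{q_0}>0$.
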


\begin{proof}
If we have an $x\in C(M;\tri)$ with the properties listed in Corollary \ref{cor: sas iff no vert classes}, then we can also find
such an $x$ for which all coordinates are integers. This follows since $x$ is a solution of a system of homogenous linear equations 
and inequalities with integer coefficients, so rational and hence integer solutions can also be found. 
In addition, we can also assume that the triangle coordinates are also non-negative integers. If this isn't already the case, we can add normal copies of the peripheral tori or Klein bottles (which are entirely made from normal triangles) until it is. After these modifications, $x$ is an element of $\mathcal{N}(M;\tri)$.\end{proof}

\begin{defn}
We refer to a normal class in $\mathcal{N}(M;\tri)$ with all non-vertical quadrilateral coordinates zero 
(as in Corollary \ref{cor: sas iff no vert}) as a 
\textbf{vertical normal class} or a \textbf{vertical class}. 
\end{defn}

\begin{rmk}\label{vertical only in pillows}
If we have a triangulation and semi-angle structure as constructed in Remark \ref{rmk: natural angle structure}, and we also have a normal class $S$ that is vertical relative to the semi-angle structure, then it can only have non-zero vertical quadrilaterals in the layered triangulations of the polygonal pillows, since the triangulations of the polyhedra have all angles positive. \end{rmk}

Next, we analyse the behaviour of a vertical class in relation to the layered triangulation.

\begin{defn}
Let $F$ be the number of triangles in $\tri$.
There is a well defined linear map $\alpha:\mathcal{N}(M;\tri) \to (\Z_{\geq0})^{3F}$, where $\alpha(S)$ counts the number of each type of normal arc in each of the triangles of $\tri$ induced by the normal class. For a given $S \in \mathcal{N}(M;\tri)$, these arcs can be realised as disjoint curves in each triangle, with the endpoints of the curves chosen so that they join up consistently at edges of the triangulation. We call these curves the \textbf{arc pattern} for the faces of $\tri$.
(This arc pattern is well-defined up to normal isotopy in $\tri^{(2)}$.)\\

Suppose $\mathcal{L}$ is a layered triangulation of a polygonal pillow within $\tri$, with layers $L_1, L_2, \ldots, L_n$. We refer to the restriction of the arc pattern to $L_i$ as the {\bf arc pattern} on the layer $L_i$.
\end{defn}

\begin{lemma} \label{change in arc pattern} Let $S \in \mathcal{N}(M;\tri)$ be a vertical normal class. Then
the arc patterns $A_i$ and $A_{i+1}$ of $S$ on consecutive layers $L_i$ and $L_{i+1}$ have the following properties:
\begin{enumerate}
\item  \label{prop1} Outside the quadrilateral on which $L_i$ and $L_{i+1}$ differ by a diagonal flip, $A_i$ is identical to $A_{i+1}$.
\item  \label{prop2} Inside the quadrilateral on which $L_i$ and $L_{i+1}$ differ by a diagonal flip, $A_i$ and $A_{i+1}$ are related as shown in Figure \ref{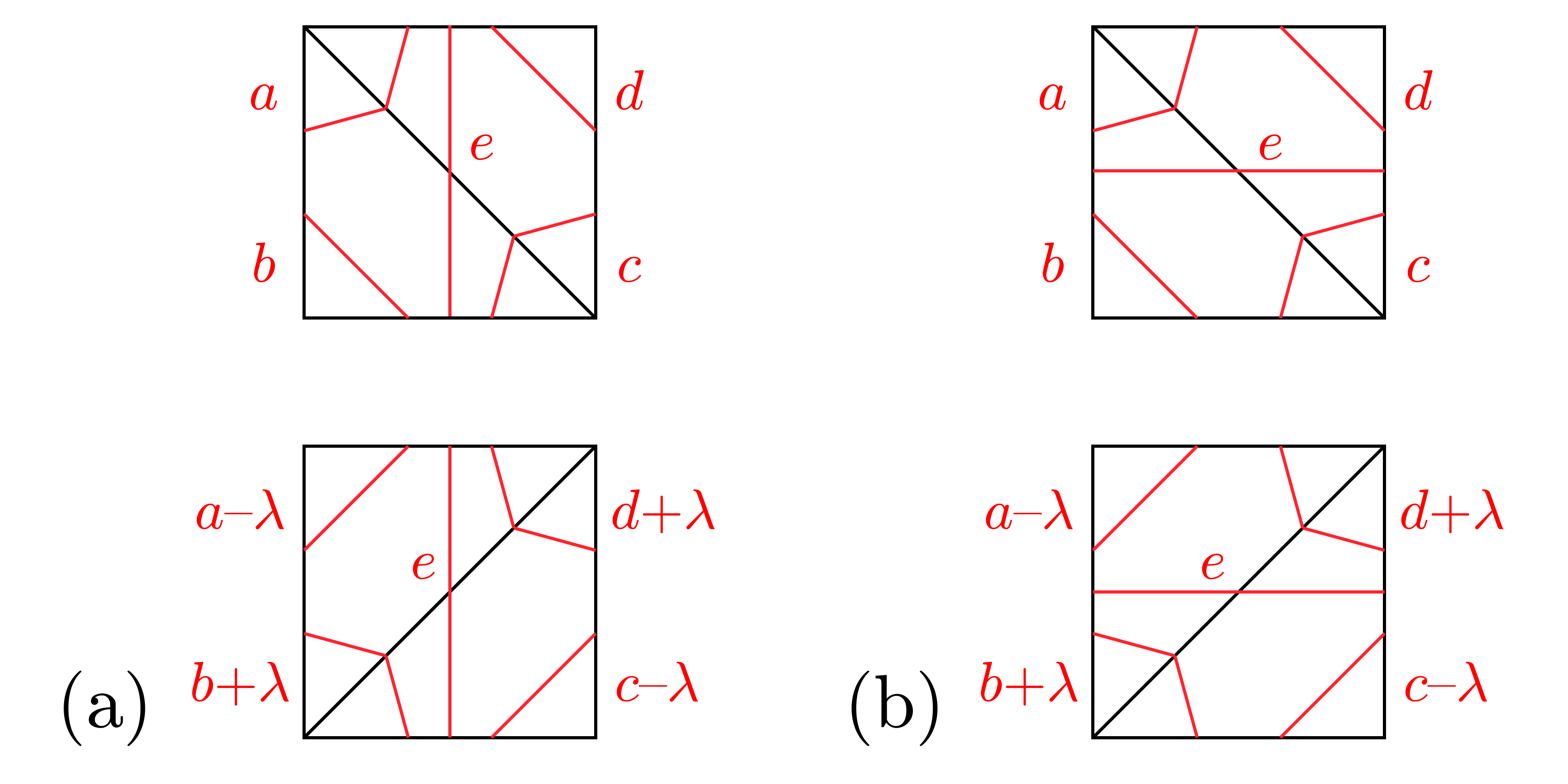}. Depending on the multiplicities of the normal arcs in the two triangles above and below the tetrahedron, the normal arcs connect to each other either as in diagram (a) or diagram (b). The multiplicities $a,b,c,d,e, a-\lambda, b+\lambda,c-\lambda, d+\lambda \geq 0$ are all non-negative integers, as is $\lambda$. If there are $x_1$ and $x_2$ of the two vertical quadrilateral types in the tetrahedron, then $\lambda = \min\{x_1,x_2\}$ and $e = |x_1 - x_2|$.
\end{enumerate}
\end{lemma}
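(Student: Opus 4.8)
The plan is to analyze a single layer transition, which corresponds to a single tetrahedron $\sigma$ glued into the polygonal pillow, and to track how the arc pattern in the two "top" faces of $\sigma$ relates to the arc pattern in the two "bottom" faces. Since $S$ is a vertical normal class, by Remark \ref{vertical only in pillows} the only non-zero quadrilateral coordinates of $S$ occur inside the layered triangulations of polygonal pillows, so the tetrahedron $\sigma$ may contain only the two vertical quadrilateral types (say with coordinates $x_1, x_2$) and its four normal triangle types, with coordinates we can name after the vertex each triangle cuts off. Property \ref{prop1} is then essentially immediate: the edges of $L_i$ that are not involved in the diagonal flip are shared with $L_{i+1}$, and $\sigma$ does not meet those faces, so the arc pattern there is literally the same set of normal arcs; I would state this carefully by identifying the faces of $L_i$ and $L_{i+1}$ outside the flipped quadrilateral with faces of the triangulation that are disjoint from $\sigma$.

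For property \ref{prop2}, the core computation is local to $\sigma$. First I would set up coordinates: the flipped quadrilateral is the union of two triangular faces of $L_i$ (the "bottom" pair) and two triangular faces of $L_{i+1}$ (the "top" pair), and $\sigma$ has exactly these four triangles as its boundary faces, with the two diagonals being the two edges of $\sigma$ that are opposite to the vertical quadrilateral type(s). The normal arcs appearing in each of the four faces come in three types per triangle, but only certain types are compatible with being cut off by the triangles-and-vertical-quads of $\sigma$; I would enumerate, in each of the four faces, which arc types are induced by the four triangle coordinates of $\sigma$ and which by $x_1$ and $x_2$. The compatibility (matching) equations along the two diagonals and along the four outer edges of the quadrilateral then force the arc counts in the top faces to be obtained from those in the bottom faces by the substitution described: two of the four "corner" multiplicities go $a \mapsto a-\lambda$, $c \mapsto c - \lambda$ and the other two go $b \mapsto b + \lambda$, $d \mapsto d + \lambda$, where $\lambda$ is the number of vertical quads "switched" across the flip, plus a leftover family of $e$ arcs that run straight through. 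Computing how many quads of each of the two vertical types force a given corner count to change, one reads off $\lambda = \min\{x_1, x_2\}$ (the number that can be "cancelled" against each other in the flip) and $e = |x_1 - x_2|$ (the excess of one type, which must be routed through as a family of parallel arcs crossing both diagonals). The two cases (a) and (b) of Figure \ref{arc_pattern_changes.pdf} correspond to the two ways the parallel families on the two sides can hook up at the shared vertices, governed by the relative sizes of the triangle multiplicities in the top versus bottom faces; I would dispatch this by a short case check on which corner triangle "wins."

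The non-negativity assertions ($a-\lambda, b+\lambda, c-\lambda, d+\lambda \geq 0$ and $\lambda \geq 0$ integral) follow because all normal coordinates of $S$ are non-negative integers by definition of $\mathcal{N}(M;\tri)$, and because $\lambda = \min\{x_1,x_2\}$ is therefore a non-negative integer bounded above by each of the relevant corner counts — the latter being exactly the content of the matching equations, since the corner count in the bottom face is at least the number of vertical quads of the corresponding type abutting that corner.

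I expect the main obstacle to be purely bookkeeping rather than conceptual: carefully labelling the three arc types in each of the four faces of $\sigma$, keeping straight which normal piece of $\sigma$ contributes an arc to which face and at which corner, and verifying that the matching equations across the two diagonals and four outer edges combine to give precisely the stated substitution with the stated $\lambda$ and $e$. Once the local picture of a single tetrahedron is pinned down and drawn (as in Figure \ref{arc_pattern_changes.pdf}), the "pattern-change across a diagonal flip" is forced, and both cases (a) and (b) fall out of the finitely many ways the parallel arc families can be joined at vertices consistently with the given multiplicities.
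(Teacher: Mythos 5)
Your proposal follows essentially the same route as the paper's proof: a purely local enumeration of the normal pieces in the layered tetrahedron (the two vertical quadrilateral types with multiplicities $x_1,x_2$ and the four triangle types $t_1,\dots,t_4$), reading off the induced normal arcs on its four faces and matching them across the diagonals and outer edges, with property (i) immediate since the faces outside the flipped quadrilateral are untouched. The only small imprecision is in the case dichotomy: the connection pattern of Figure \ref{arc_pattern_changes.pdf}(a) versus (b) is decided by whether $x_1\ge x_2$ or $x_1\le x_2$ (the paper takes $\lambda=\min\{x_1,x_2\}$, $e=|x_1-x_2|$, $a=t_1+\lambda$, $b=t_2$, $c=t_3+\lambda$, $d=t_4$), i.e.\ by the vertical quadrilateral multiplicities rather than by which corner triangle coordinate ``wins''.
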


\begin{figure}[htb]
\centering
\includegraphics[width=0.8\textwidth]{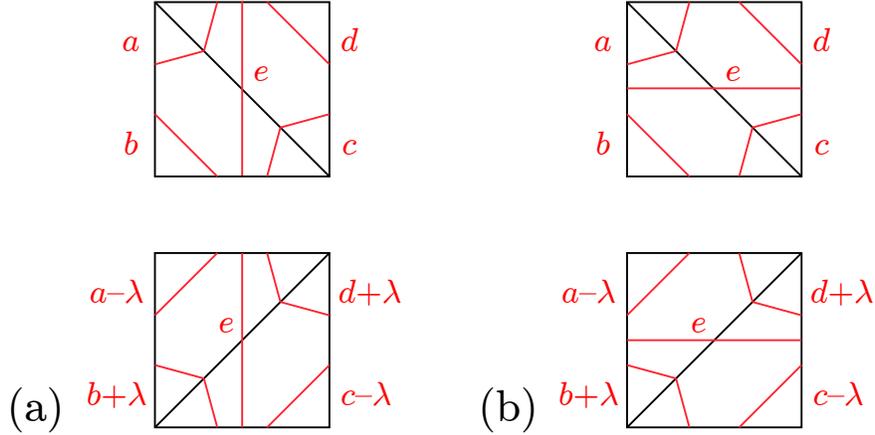}
\caption{The possible ways in which the arc pattern can change in the quadrilateral on which $L_i$ and $L_{i+1}$ differ by a diagonal flip.}
\label{arc_pattern_changes.pdf}
\end{figure}

\begin{proof}
Property \ref{prop1} is true because the triangles outside the quadrilateral do not change between the two layers. Property  \ref{prop2} follows by considering the normal arcs on the 4 faces of the tetrahedron induced by $x_1$ and $x_2$ vertical quadrilaterals of the two types and $t_i$ triangles at vertex $i$, as shown in Figure \ref{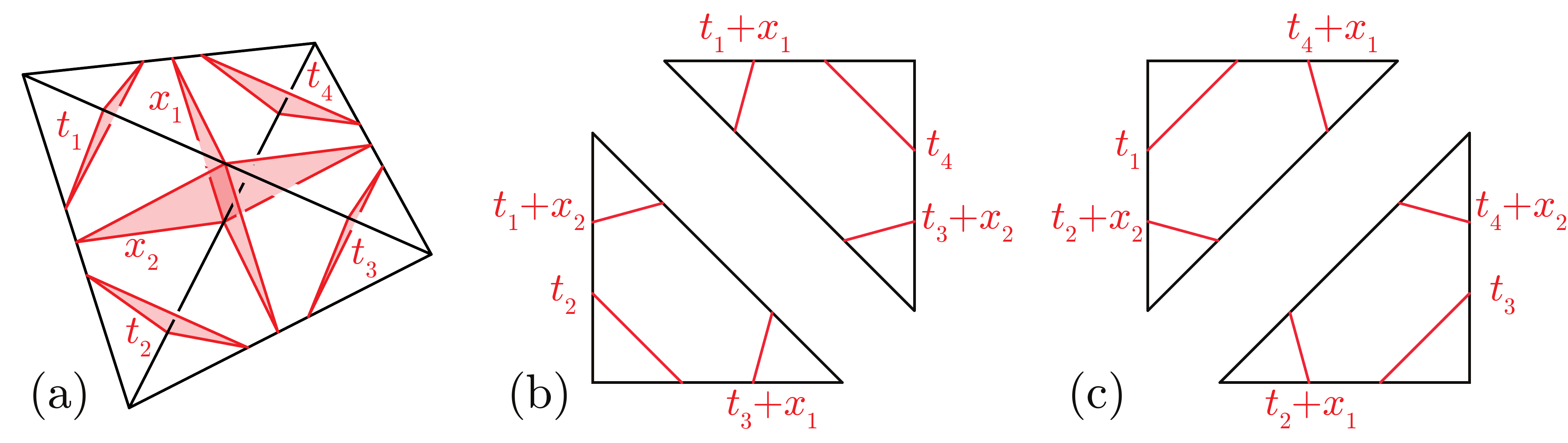}.
These normal arcs fit together to give the arc pattern
shown in Figure \ref{arc_pattern_changes.pdf}(a) when $x_1 \ge x_2$,
and shown in Figure \ref{arc_pattern_changes.pdf}(b) when $x_1 \le x_2$, where
$\lambda = \min\{x_1,x_2\}$,
$a= t_1+\lambda, b=t_2, c=t_3+\lambda$, $d=t_4$ and $e = |x_1 - x_2|$.
\end{proof}

\begin{figure}[htb]
\centering
\includegraphics[width=\textwidth]{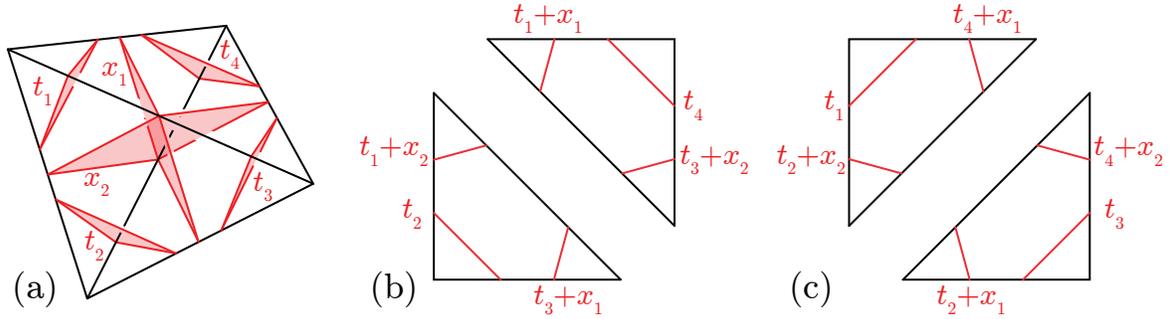}
\caption{In diagram (a), vertical quadrilaterals and triangles in a tetrahedron, with non-negative integer multiplicities $x_1,x_2$ and $t_1,t_2,t_3,t_4$ respectively. In diagrams (b) and (c), the numbers of normal arcs in each of the triangles on the top and the bottom of the tetrahedron, respectively.}
\label{vertical_quads_and_tris_3_figs.pdf}
\end{figure}

\begin{rmk}\label{arc_change_mnemonic}
A useful mnemonic for seeing how the arc pattern can change at a corner of a quadrilateral on which a tetrahedron is layered is as follows. Note that such a corner consists of either one or two corners of triangles of the triangulation of the layer. We change from one triangle corner to two, or vice versa, when we layer on the tetrahedron. If a quadrilateral corner ``increases'' from one to two triangle corners then it gains $\lambda \geq 0$ arcs, and if it ``decreases'' from two to one triangle corner then it loses  $\lambda \geq 0$ arcs.
\end{rmk}

\section{Vertical normal classes in a layered triangulation of a polygonal pillow}\label{sec: vert norm surf layered triang}

By Corollary \ref{cor: sas iff no vert} and Remark \ref{vertical only in pillows}, we are interested in vertical normal classes in the triangulation which have quadrilaterals only in the polygonal pillows. Therefore the only normal disks appearing in the polyhedra are triangles in each tetrahedron, which join up to form some number of parallel vertex linking disks at each vertex of the polyhedron. Thus, the arc pattern on each side of each polygonal pillow consists of some number of parallel vertex linking arcs at each vertex of the polygon. 

\begin{lemma}\label{change by +- lambda}
Let $S$ be a normal class in a triangulation of a PPP-cellulation $\mathcal{P}'$ of a manifold, and let $Q$ be an $n$-gonal pillow in $\mathcal{P}'$, with vertices labelled $v_1, \ldots, v_n$ (ordered cyclically).
Suppose that the arc patterns for $S$ on the top and bottom faces of $Q$ consist of
$w_i^+$ and  $w_i^-$ parallel vertex linking arcs at $v_i$ respectively, and no additional arcs.
 If $n$ is odd then $w_i^+ = w_i^-$ for all $i$. If $n$ is even then there is some $\lambda \in \Z$ such that $w_i^+ = w_i^- + (-1)^i \lambda$ for all $i$.
\end{lemma}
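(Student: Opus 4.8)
The plan is to avoid the layered triangulation of $Q$ altogether and argue directly from the boundary. The essential point is that the two $n$-gonal faces $Q_-$ and $Q_+$ of the pillow $Q$ share their boundary: for each $i$ the edge $e_i := v_iv_{i+1}$ (indices read mod $n$) is a single edge of $\tri$, lying on the boundary of both $Q_-$ and $Q_+$ (and of the polyhedra glued along them), since a triangulation of a polygon adds only interior diagonals. The arc pattern of $S$ is a $1$-manifold in $\tri^{(2)}$ whose endpoints match up along the edges of $\tri$, so for each $i$ there is a well-defined number $m_i$ of arc-endpoints on $e_i$, and every $2$-cell incident to $e_i$ meets the arc pattern in arcs having exactly these endpoints. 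I will compute $m_i$ in two ways, once from the side of $Q_-$ and once from the side of $Q_+$.

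On $Q_-$ the hypothesis says the arc pattern consists only of vertex linking arcs, $w_j^-$ of them at $v_j$, and nothing else. A vertex linking arc at $v_j$ has exactly one endpoint on each of the two $n$-gon edges incident to $v_j$, namely $e_{j-1}$ and $e_j$, so it contributes an endpoint to $e_i$ precisely when $j=i$ or $j=i+1$. Hence $m_i = w_i^- + w_{i+1}^-$. The identical count carried out on $Q_+$ gives $m_i = w_i^+ + w_{i+1}^+$, so
\[
 w_i^+ + w_{i+1}^+ \;=\; w_i^- + w_{i+1}^- \qquad \text{for all } i \in \Z/n\Z .
\]
Now set $\delta_i := w_i^+ - w_i^- \in \Z$. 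The relations above read $\delta_i + \delta_{i+1} = 0$, so $\delta_{i+1} = -\delta_i$ and hence $\delta_i = (-1)^{i-1}\delta_1$. Closing up the cycle gives $\delta_1 = \delta_{n+1} = (-1)^n \delta_1$. If $n$ is odd this forces $\delta_1 = 0$, whence $\delta_i = 0$ for all $i$, i.e. $w_i^+ = w_i^-$. If $n$ is even the closure condition holds automatically, and writing $\lambda := w_1^- - w_1^+ \in \Z$ we get $\delta_i = (-1)^{i-1}\delta_1 = (-1)^i\lambda$, i.e. $w_i^+ = w_i^- + (-1)^i\lambda$ for all $i$, as claimed.

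The argument is short, and I expect the only places needing care to be (a) the observation that the arc pattern restricted to a single face $Q_\pm$ already pins down $m_i$ — this is exactly where the hypothesis ``no additional arcs'' is used — and (b) keeping the edge indices straight relative to the vertex indices when counting endpoints on $e_i$. One could instead prove the lemma by tracking the vector $(w_1, \dots, w_n)$ through the layers of the layered triangulation of $Q$, using Lemma \ref{change in arc pattern} and the mnemonic of Remark \ref{arc_change_mnemonic} (each diagonal flip changes the arc pattern only inside one quadrilateral, adjusting the relevant corner multiplicities by $\pm\lambda_j$); but that route is more delicate and, unlike the computation above, is tied to the specific way $Q$ is triangulated.
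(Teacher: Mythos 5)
Your proof is correct and is essentially the paper's own argument: the paper likewise counts the intersections of $S$ with each boundary edge $v_iv_{i+1}$ (its $a_i$ is your $m_i$), obtains $w_i^+ + w_{i+1}^+ = w_i^- + w_{i+1}^-$, and propagates the alternating differences around the cycle to conclude. Your only additions are the explicit bookkeeping of the sign of $\lambda$ and the remark about where the ``no additional arcs'' hypothesis enters, both consistent with the paper.
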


\begin{proof}
Let $a_i$ be the intersection number of $S$ with the edge between $v_i$ and $v_{i+1}$ (where we view the subscripts as being modulo $n$). Then $w_i^+  +  w_{i+1}^+ = a_i = w_i^-  +  w_{i+1}^-$. Thus $w_i^+  -  w_i^- = w_{i+1}^-  -  w_{i+1}^+$. If $n$ is odd then when we track the equations around the cycle we get that $w_i^+  -  w_i^- = w_i^-  -  w_i^+$, so $w_i^+ = w_i^-$. If $n$ is even then we see that the differences between the multiplicities on either side alternate, giving us $\lambda$ as in the statement of the Lemma.
\end{proof}

\begin{lemma}\label{quadrilaterals in pillow adjacent cone vertices}
Suppose we have a vertical normal class $S$ in a triangulation of a PPP-cellulation $\mathcal{P}$ of a manifold, and let $Q$ be an $n$-gonal pillow in $\mathcal{P}$. 
Assume that $Q$ has a layered triangulation produced as in Algorithm \ref{layered triang cone triangulations}, where the cone triangulations on either side have cone vertices at distance one from each other. Then $Q$ can contain vertical quadrilaterals of $S$ 
only if $n$ is 4.
\end{lemma}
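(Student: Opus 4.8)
The plan is to follow the normal arc pattern of $S$ through the layers $L_1,\dots,L_{n-2}$ of the layered triangulation of $Q$, using Lemma \ref{change in arc pattern} to control each transition and then comparing the net effect with the rigid constraint on boundary multiplicities supplied by Lemma \ref{change by +- lambda}.

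\emph{Setup.} By Algorithm \ref{layered triang cone triangulations}, after a cyclic relabelling we may take the two cone vertices to be $v_n$ and $v_1$, so that $Q$ is triangulated by $n-3$ tetrahedra $\tau_2,\dots,\tau_{n-2}$, where $\tau_i$ realises the diagonal flip replacing $(v_n,v_i)$ by $(v_1,v_{i+1})$; thus $\tau_i$ sits in the transition from $L_{i-1}$ to $L_i$ and is attached along the quadrilateral $Q_i$ with corners $v_1,v_i,v_{i+1},v_n$ in cyclic order. In particular $v_1$ and $v_n$ are corners of \emph{every} $Q_i$. If $n=3$ the index range is empty, so there are no tetrahedra and nothing to prove; assume $n\ge 4$ and, for contradiction, that some $\tau_i$ carries a vertical quadrilateral of $S$. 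For each $i$ let $x_1^{(i)},x_2^{(i)}\ge 0$ be its two vertical quadrilateral coordinates, and set $\lambda_i=\min\{x_1^{(i)},x_2^{(i)}\}$, $e_i=|x_1^{(i)}-x_2^{(i)}|$ as in Lemma \ref{change in arc pattern}; by hypothesis $x_1^{(i_0)}+x_2^{(i_0)}>0$ for some $i_0$.

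\emph{Step 1: locating the moving corners.} By the remarks opening Section \ref{sec: vert norm surf layered triang}, the arc patterns $A_1$ on $L_1=Q_-$ and $A_{n-2}$ on $L_{n-2}=Q_+$ consist solely of vertex linking arcs, say $w_j^-$ and $w_j^+$ copies at $v_j$. Now $Q_i$ is cut in $L_{i-1}$ by $(v_n,v_i)$ into $v_1v_iv_n$ and $v_iv_{i+1}v_n$, and in $L_i$ by $(v_1,v_{i+1})$ into $v_1v_iv_{i+1}$ and $v_1v_{i+1}v_n$; hence under $\tau_i$ the corners $v_1$ and $v_{i+1}$ pass from one to two triangle corners while $v_i$ and $v_n$ pass from two to one. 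By Lemma \ref{change in arc pattern} and the mnemonic of Remark \ref{arc_change_mnemonic}, passing from $A_{i-1}$ to $A_i$ therefore raises the vertex--linking count at $v_1$ and at $v_{i+1}$ by $\lambda_i$, lowers it at $v_i$ and at $v_n$ by $\lambda_i$, leaves all other corners unchanged, and in addition produces $e_i$ ``crossing'' arcs inside $Q_i$ joining a pair of opposite sides of $Q_i$.

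\emph{Step 2: no imbalance is possible.} I claim $e_i=0$ for all $i$. A crossing arc produced by $\tau_i$ joins two opposite sides of $Q_i$, hence (since $2\le i\le n-2$ and $n\ge 4$) joins two $1$--simplices of $\tri$ with no common vertex. A later flip $\tau_{i'}$ $(i'>i)$ only alters the triangulation inside $Q_{i'}$ and fixes the intersection points of arcs with the edges of $\tri$; tracing the arc within the layers it must begin and end on sides of the polygon $Q$ (honest edges of $\tri$, meeting $S$ in a fixed number of points), and these stay non-adjacent, so the crossing arc survives to $A_{n-2}$ as a non--vertex--linking arc, contradicting the form of $A_{n-2}$. (Equivalently: the number of non--vertex--linking arcs on a layer cannot decrease under a diagonal flip and equals $0$ on both $Q_-$ and $Q_+$, so every $e_i$ vanishes.) Hence $x_1^{(i)}=x_2^{(i)}=\lambda_i$ for all $i$, with $\lambda_{i_0}>0$.

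\emph{Step 3: accounting for the net change.} Summing Step 1 over all $\tau_i$: $v_1$ is a ``gaining'' corner in every flip and a ``losing'' corner in none, so $w_1^+-w_1^-=\sum_{i=2}^{n-2}\lambda_i$; $v_2$ is affected only by $\tau_2$, as a losing corner, so $w_2^+-w_2^-=-\lambda_2$; and $v_{n-1}$ is affected only by $\tau_{n-2}$, as a gaining corner, so $w_{n-1}^+-w_{n-1}^-=\lambda_{n-2}$. If $n$ is odd, Lemma \ref{change by +- lambda} gives $w_1^+=w_1^-$, so $\sum_i\lambda_i=0$, whence all $\lambda_i=0$, contradicting $\lambda_{i_0}>0$. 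If $n$ is even, Lemma \ref{change by +- lambda} gives $w_j^+-w_j^-=(-1)^j\lambda$ for a single $\lambda\in\Z$; then $\sum_i\lambda_i=-\lambda$, $-\lambda_2=\lambda$ and $\lambda_{n-2}=-\lambda$, so $\lambda_2=\sum_i\lambda_i$ and $\lambda_{n-2}=\sum_i\lambda_i$. The first forces $\lambda_i=0$ for all $i\ne 2$ and the second forces $\lambda_i=0$ for all $i\ne n-2$; since $n\ge 6$ gives $2\ne n-2$, all $\lambda_i=0$, again a contradiction. Hence $n=4$, as claimed.

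\emph{Main obstacle.} Steps 1 and 3 are bookkeeping once Lemmas \ref{change in arc pattern} and \ref{change by +- lambda} are in hand; the delicate point is Step 2 — rigorously ruling out the imbalanced case $x_1^{(i)}\ne x_2^{(i)}$, i.e.\ showing that a non--vertex--linking arc once forced into existence cannot be removed by the remaining diagonal flips and so is incompatible with the vertex--linking arc pattern on $Q_+$.
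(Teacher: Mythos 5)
Your overall strategy (push the arc pattern through the layers via Lemma \ref{change in arc pattern} and play the positivity of the $\lambda_i$ off against Lemma \ref{change by +- lambda}) is the same as the paper's, but two of your steps contain genuine errors, and they are exactly the points the paper's train-track bookkeeping in Figure \ref{retri_polygon_2_cones_arc_patterns.pdf} is designed to handle. First, Step 1 is wrong: the $\pm\lambda_i$ changes of Lemma \ref{change in arc pattern} and Remark \ref{arc_change_mnemonic} apply to the counts of \emph{corner arcs of the flip quadrilateral $Q_i$}, not to the vertex-linking multiplicities of the polygon pattern. For $n\geq 5$ the corners of $Q_i$ at vertices adjacent to a diagonal are only part of the polygon corner there, and a gained corner arc continues across that diagonal into the unchanged pattern outside $Q_i$, typically becoming a long non-vertex-linking arc. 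Concretely, after your first flip $\tau_2$ (with $\lambda_2>0$) the vertex-linking count at $v_3$ stays equal to $w_3$; instead $\lambda_2$ new arcs appear running from the side $(v_2,v_3)$ all the way across the polygon to the side $(v_{n-1},v_n)$, following what used to be linking arcs of $v_n$. So the intermediate layers are \emph{not} ``vertex-linking arcs plus crossings confined to $Q_i$'', and the telescoped identities in Step 3 (e.g.\ $w_1^+-w_1^-=\sum_i\lambda_i$) are not justified by your layer-by-layer accounting.

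Second, Step 2's claim that $e_i=0$ for all $i$ is both unproven and false as an a priori statement. Your fallback assertion that the number of non-vertex-linking arcs cannot decrease under a diagonal flip fails in general: a flip is invertible, and the inverse of the move just described takes a pattern with $\lambda_2$ crossing arcs back to a purely vertex-linking one. Worse, the two effects you try to decouple interact: the $\lambda_2$ non-vertex-linking arcs created by $\tau_2$ restrict to \emph{crossing arcs of the next flip quadrilateral} $Q_3$ (they enter through the side $(v_1,v_3)$ and leave through the opposite side $(v_4,v_n)$), so by Lemma \ref{change in arc pattern} the tetrahedron $\tau_3$ is forced to have $|x_1^{(3)}-x_2^{(3)}|=\lambda_2$, i.e.\ $e_3=\lambda_2$, which contradicts ``$e_i=0$ for all $i$'' unless one already knows $\lambda_2=0$ --- but that is what is to be proved. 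The paper's proof repairs exactly these two points by tracking the full pattern in train-track style through all layers (Figure \ref{retri_polygon_2_cones_arc_patterns.pdf}) and extracting the conclusion only at the last layer, where the branches labelled $\sum_{i=2}^{n-3}\lambda_i$ and $\lambda_1+\lambda_2$ consist of non-vertex-linking arcs and must therefore have multiplicity zero; your proposal as written skips the global bookkeeping that makes that deduction legitimate.
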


\begin{proof}
We are in the case depicted in Figure \ref{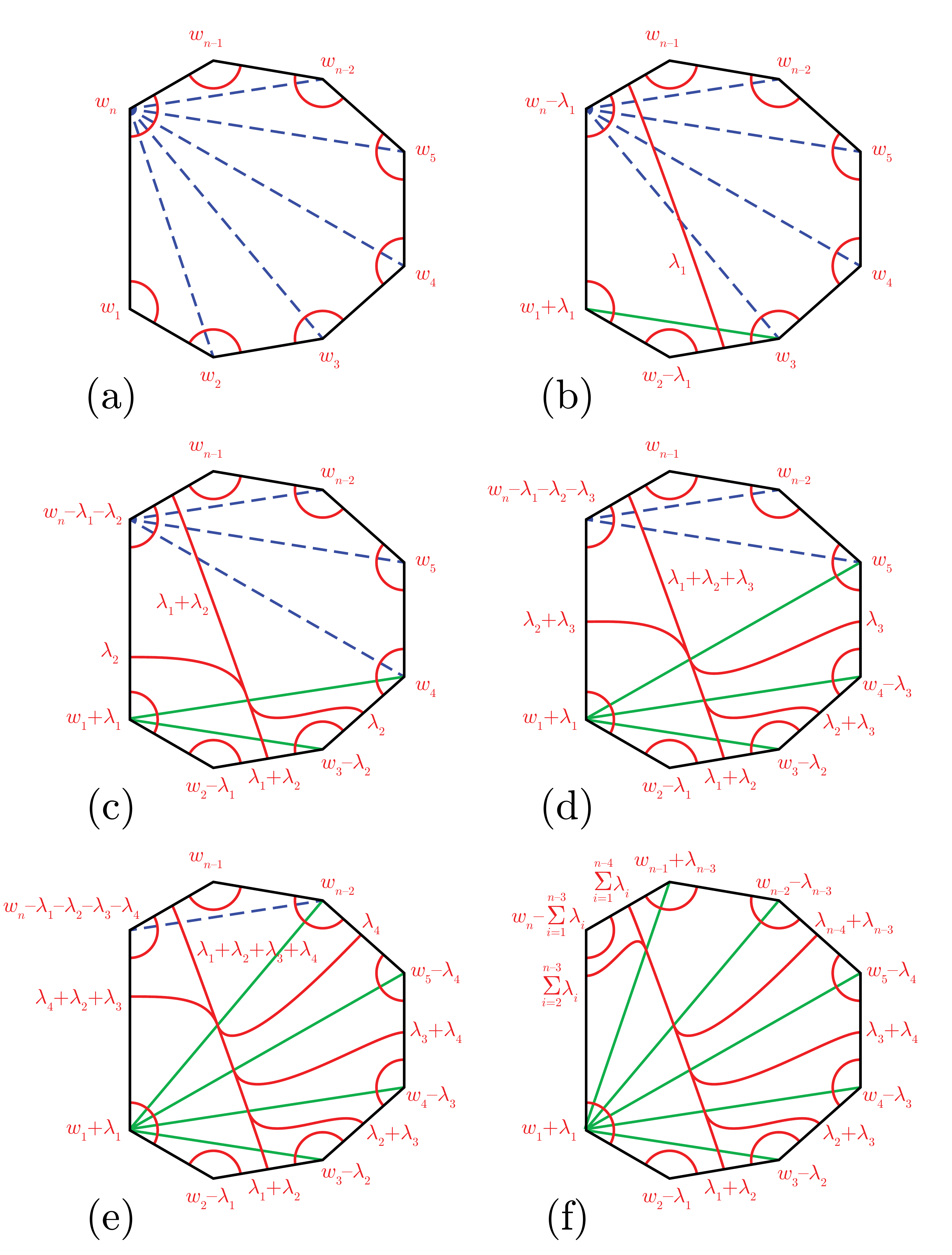}. We begin with some number of vertex linking arcs as shown in diagram (a). We will refer to the vertices of the polygon as $v_1$ through $v_n$, where the arc linking $v_i$ is labelled with multiplicity $w_i$ in diagram (a). Diagram (b) shows the arc pattern after the first diagonal flip, induced by layering a single tetrahedron onto the quadrilateral $Q_1$ with corners $v_1, v_2, v_3$ and $v_n$. Following Remark \ref{arc_change_mnemonic}, we see that the multiplicities at the four corners can go up or down by some $\lambda_1\geq0$ respectively. We have had to draw the arc pattern differently, with the new curve labelled with multiplicity $\lambda_1$ crossing the polygon. Note that despite this change, one can check that the combinatorics of the arc pattern does not change in the complementary polygon to $Q_1$ (as must be the case by property (i) of Lemma \ref{change in arc pattern}). One can also check that the arc multiplicities at the four corners of $Q_1$ are as given by property (ii) of Lemma \ref{change in arc pattern}.\\

We proceed to diagram (c) as before, layering a tetrahedron onto the quadrilateral $Q_2$ with corners $v_1, v_3, v_4$ and $v_n$. Here we have to draw the arc pattern in ``train track style'', in order to produce a picture in full generality. Again one can check that the multiplicities on arcs outside of $Q_2$ do not change, and that multiplicities at the corners of 
$Q_2$ change by $\lambda_2$ either up or down, as given by Remark \ref{arc_change_mnemonic}. This continues, layering a tetrahedron on each $Q_i$, with corners $v_1, v_{i+1}, v_{i+2}$ and $v_n$, 
up to the last quadrilateral, $Q_{n-3}$. 
At this stage we are in the situation of diagram (f). 
Here, all of the multiplicities of non-vertex linking arcs have to be zero, and the multiplicities of vertex linking arcs are 
given by Lemma \ref{change by +- lambda}. \\

In particular, consider the arc segment labelled $\sum_{i=2}^{n-3} \lambda_i$. This segment is made up of a number of arcs that follow the train tracks across the polygon, ending at possibly many different sides of the polygon. However, all of those arcs are non-vertex linking, so they must all have multiplicity zero. Since all $\lambda_i \geq 0$,  it follows that all $\lambda_i$ except possibly for $\lambda_1$ must be zero. Then the same argument applied to the arc labelled $\lambda_1 + \lambda_2$ shows that $\lambda_1 = 0$ also. 
By Lemma \ref{change in arc pattern}, 
this means that we have only vertex linking arcs at every layer of the layered triangulation, and that  these are connected to each other only by normal triangles. 
This argument applies unless there is no $\lambda_2$, which corresponds to the case $n=4$. 
\end{proof}

\begin{rmk} \label{quads in quad}
The $n=4$ case in the Lemma occurs for $\lambda >0$ as in Figure \ref{arc_pattern_changes.pdf} when $e=0$, i.e.\thinspace when the two vertical quadrilateral types occur with equal multiplicity $x_1=x_2=\lambda$.
\end{rmk}

\begin{figure}[htbp]
\centering
\includegraphics[width=\textwidth]{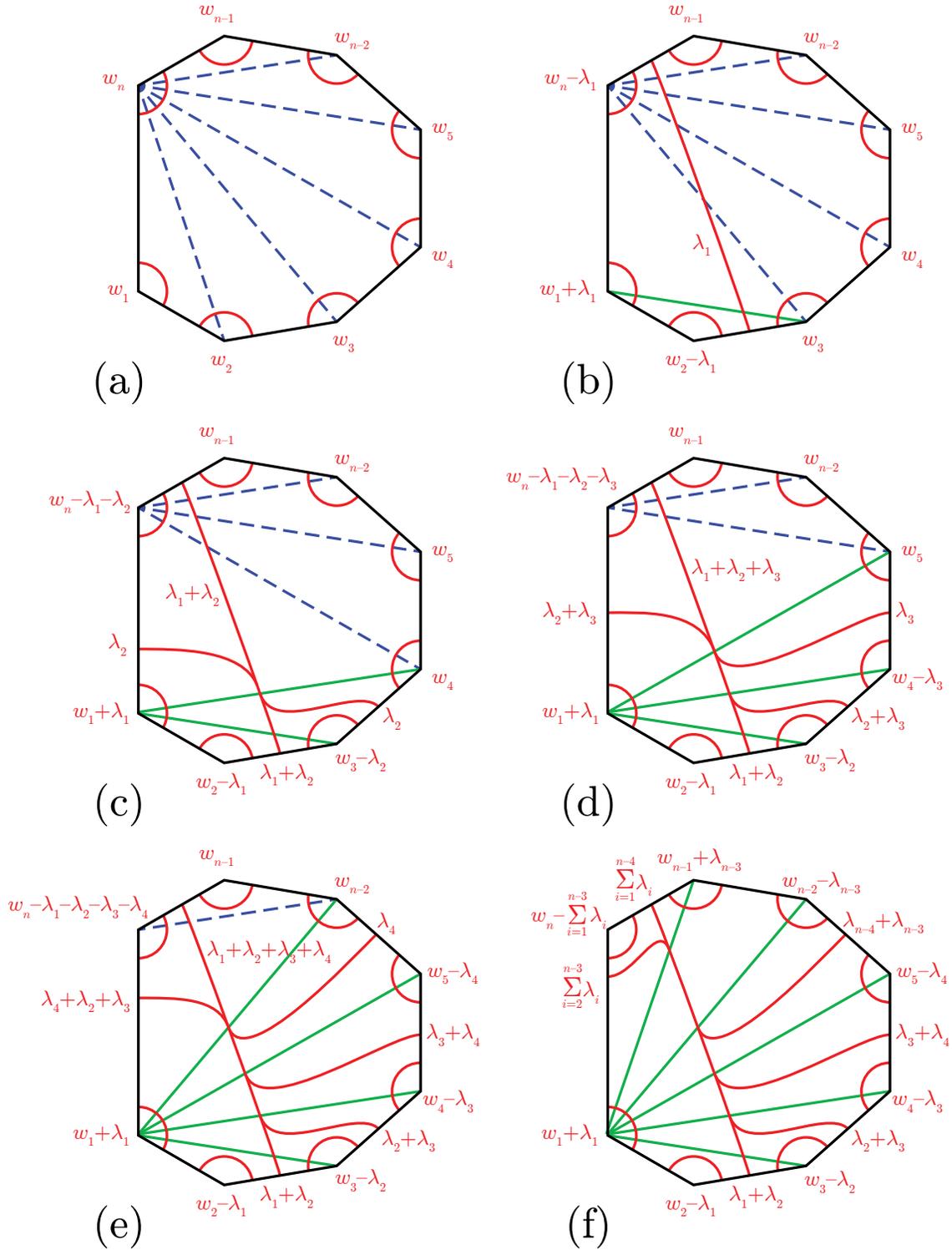}
\caption{Layered triangulation between two cone triangulations at distance one from each other, with the possible arc patterns that start from a pattern consisting of vertex linking arcs. The labels near to each arc refer to the multiplicity at that arc, with the multiplicities at unlabelled arcs implied by the smoothing at the ``train track junctions''.
All $w_i, \lambda_i$ and all multiplicities that appear are non-negative integers.}
\label{retri_polygon_2_cones_arc_patterns.pdf}
\end{figure}

\begin{lemma}\label{quadrilaterals in only 4 and 6 gons}
Suppose we have a vertical normal class $S$ in a triangulation of a PPP-cellulation $\mathcal{P}$ of a manifold, and let $Q$ be an $n$-gonal pillow in $\mathcal{P}$. Assume that $Q$ has a layered triangulation produced as in Algorithm \ref{layered triang cone triangulations}. Then $Q$ can contain vertical quadrilaterals of $S$ only if $n$ is 4 or 6.
\end{lemma}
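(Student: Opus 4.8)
The plan is to analyse the three cases of Algorithm \ref{layered triang cone triangulations} according to the positions of the two cone vertices $v,v'$ inducing the triangulations on the faces $Q_-$ and $Q_+$ of $Q$. If $v$ and $v'$ coincide (or are opposite vertices of a $4$-gon) there is no layered triangulation, so $Q$ has no tetrahedra and there is nothing to prove. If $v$ and $v'$ are at distance one, $Q$ is built directly by the flips of step 3 of the algorithm and Lemma \ref{quadrilaterals in pillow adjacent cone vertices} already gives $n=4$. So assume $v$ and $v'$ are at distance $d\ge 2$, and (relabelling if needed) that $d\le n-d$. Then the diagonal $e$ joining $v$ and $v'$ belongs to both cone triangulations, and step 2 of Algorithm \ref{layered triang cone triangulations} presents the layered triangulation of $Q$ as the union of a layered triangulation of a sub-pillow $Q_1$ over the $(d+1)$-gon $P_1$ cut off by $e$, and a layered triangulation of a sub-pillow $Q_2$ over the $(n-d+1)$-gon $P_2$ on the other side of $e$; in each $Q_i$ the two cone vertices are the \emph{adjacent} vertices $v,v'$ of $P_i$, and $Q_1$ is trivial exactly when $d=2$, that is, when $P_1$ is a triangle.

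First I would check that the arc patterns on the faces of $Q_1$ and $Q_2$ still consist only of vertex-linking arcs, so that the argument of Lemma \ref{quadrilaterals in pillow adjacent cone vertices} applies to each of them. By Remark \ref{vertical only in pillows} the arc pattern of $S$ on $Q_-$ and $Q_+$ is made of vertex-linking arcs, and restricting such a pattern to the sub-polygon $P_i$ --- whose one new edge $e$ is met only by the arcs linking its endpoints $v,v'$ --- again yields only vertex-linking arcs of $P_i$. Hence Lemma \ref{quadrilaterals in pillow adjacent cone vertices} applies to each $Q_i$, so $Q_i$ can contain a vertical quadrilateral of $S$ only if $P_i$ is a $4$-gon. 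Since every tetrahedron of $Q$ lies in $Q_1$ or $Q_2$, if $Q$ contains a vertical quadrilateral then $P_1$ or $P_2$ is a $4$-gon, i.e.\ $d=3$ or $n-d=3$; if both hold then $n=6$, one of the allowed values.

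The remaining work, which I expect to be the main obstacle, is to rule out the case where exactly one of $P_1,P_2$ is a $4$-gon. Given $d\le n-d$, this forces either $P_1$ a $4$-gon and $P_2$ a polygon with at least five sides, or $P_2$ a $4$-gon and $P_1$ a triangle; in either case write $Q'$ for the sub-pillow over the $4$-gon and $Q''$ for the other. Then $Q''$ contains no vertical quadrilateral of $S$, so the number of arcs of $S$ linking the cone vertex $v$ is the same on the two faces of $Q''$: when $Q''$ is a layered triangulation of a polygon with at least five sides this is Lemma \ref{change in arc pattern} applied with all quadrilateral multiplicities zero, and when $Q''$ is the (trivial) triangle it holds because the vertex-linking arc pattern on that triangle is determined by the intersection numbers of $S$ with its three edges. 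Hence the $v$-linking multiplicity agrees on $Q_-$ and $Q_+$. On the other hand $Q'$ is a single tetrahedron layered on a $4$-gon; since its faces carry only vertex-linking arcs the two vertical quadrilateral multiplicities coincide, say $x_1=x_2=\lambda$ (Remark \ref{quads in quad}), and the flip changes the $v$-linking multiplicity by $\pm\lambda$, since a diagonal flip changes the number of triangle corners at every vertex of the flipped quadrilateral (cf.\ Remark \ref{arc_change_mnemonic}). Comparing the two computations of the $v$-linking multiplicity on $Q_+$ forces $\lambda=0$, hence $x_1=x_2=0$, so $Q'$ contains no vertical quadrilateral after all --- a contradiction. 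Therefore the only possibilities are $n=4$ and $n=6$.

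Besides this consistency bookkeeping, the other points needing care are the verification that vertex-linking arc patterns restrict to vertex-linking arc patterns on the sub-polygons $P_1,P_2$, and the treatment of the degenerate case in which a sub-polygon is a triangle (exactly the case $d=2$). The reduction to Lemma \ref{quadrilaterals in pillow adjacent cone vertices} by cutting along $e$ is otherwise routine.
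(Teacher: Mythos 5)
Your proposal is correct and follows essentially the same route as the paper: cut the pillow along the diagonal $e$ joining the two cone vertices, apply Lemma \ref{quadrilaterals in pillow adjacent cone vertices} to the two sub-pillows, and rule out the mixed case by tracking how the vertex-linking multiplicities change. The only (harmless) difference is in that last step, where the paper applies Lemma \ref{change by +- lambda} to the whole pillow and looks at the vertices interior to the non-4-gon subpolygon, whereas you compare the multiplicity at the shared cone vertex through the two sub-pillows --- an equivalent piece of bookkeeping.
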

\begin{proof}
With the notation of Algorithm \ref{layered triang cone triangulations}, suppose that the matching edge $e$ between the two cone vertices has endpoints at distance $k \leq n/2$ from each other around the $n$-gon. 
The triangulation consists of two layered triangulations, bridging between cone triangulations of two subpolygons, a ($k+1$)-gon and an ($n-k+1$)-gon, where the cone vertices in these subpolygons are at distance one from each other. If $k=1$, then one of these subpolygons is a trivial bigon.\\

By Lemma \ref{quadrilaterals in pillow adjacent cone vertices}, at least one subpolygon has 4 sides, and this 4-gon contains vertical quadrilaterals. If the other subpolygon is trivial then we are in the case $n=4$. Hence assume that the other subpolygon is non-trivial. Since the 4-gon contains vertical quadrilaterals, from Lemma \ref{change in arc pattern} we see that there must be a change in the multiplicities of the (necessarily vertex linking) arc pattern above and below the pillow. By Lemma \ref{change by +- lambda}, there is some $\lambda > 0$ such that the change in multiplicity around the pillow alternates by plus or minus $\lambda$. Thus, the multiplicity changes at the vertices of the pillow contained within the other subpolygon, and hence this other subpolygon must also be a 4-gon, by Lemma \ref{quadrilaterals in pillow adjacent cone vertices}. This is the case $n=6$.\end{proof}

\begin{rmk}
The case $n=4$ occurs as in Remark \ref{quads in quad}. 
The case $n=6$ occurs when the cone vertices are distance 3 from each other, giving two 4-gons as the subpolygons.
\end{rmk}

\begin{ex}\label{ex: 4-gon}
Suppose that we have a cellulation with a 4-gonal face
that has edge identifications as shown in Figure \ref{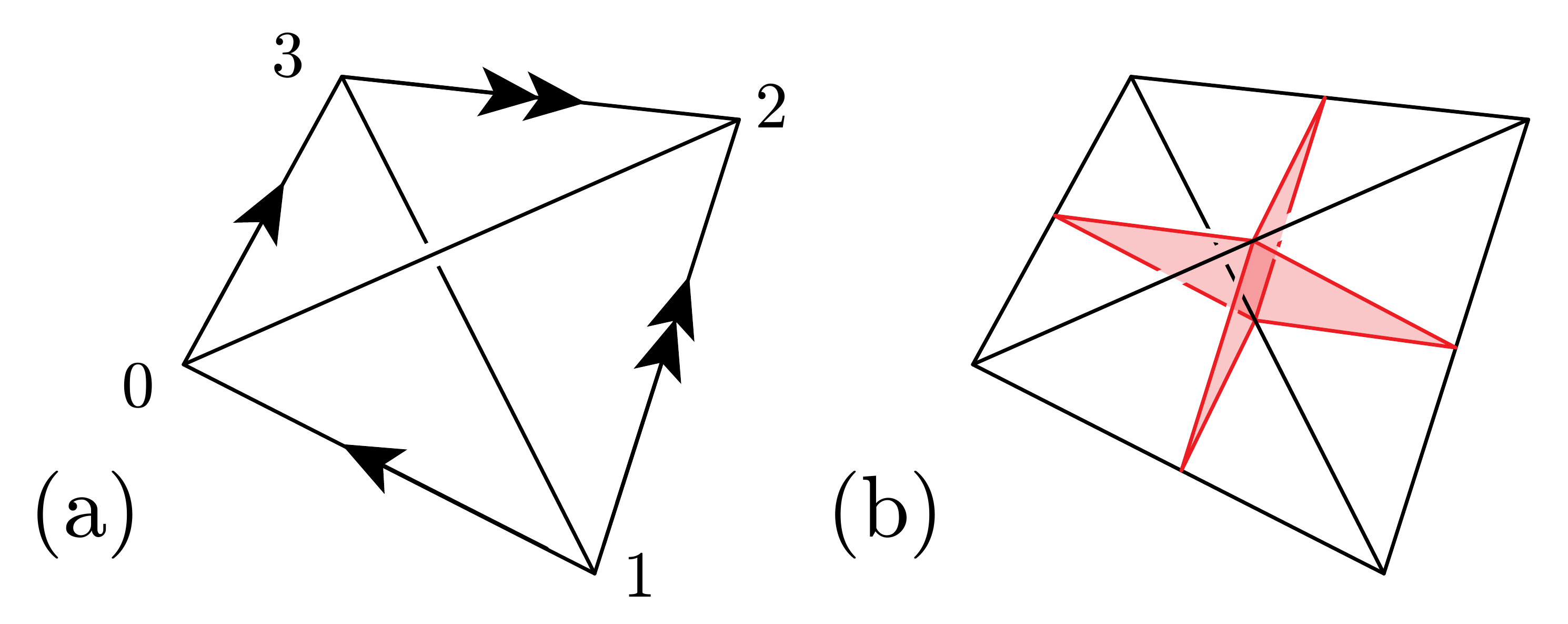}(a). Assume that the polyhedra of the cellulation are triangulated so that the two induced triangulations of the 4-gon faces do not agree. Then we can put two vertical quadrilaterals in the tetrahedron as shown in Figure \ref{4-gon_example.pdf}(b). It is easy to check that the Q-matching equations hold (see \cite{tollefson} for details) for these two quadrilaterals. (Note that the directions of the arrows don't matter, only that the edges are identified in neighbouring pairs.) 
If in addition, triangles can be added to these quadrilaterals to form a closed normal class (as opposed to a spun-normal class), then we have the existence of a vertical normal class in $\mathcal{N}(M;\tri)$.
\end{ex}

\begin{figure}[htb]
\centering
\includegraphics[width=0.6\textwidth]{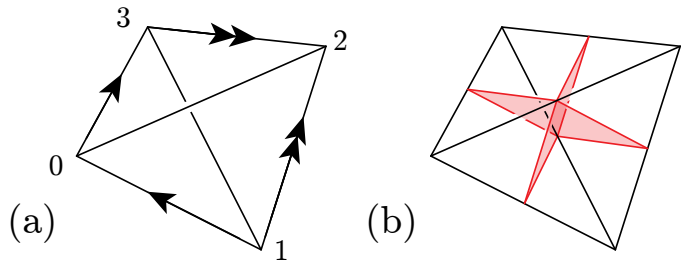}
\caption{An example vertical normal class in a single tetrahedron with appropriate identifications of edges. The quadrilaterals of the normal class are a single copy of each of the two vertical quadrilaterals within the tetrahedron. The vertex numbering shown matches Example \ref{4-gon in m136}.}
\label{4-gon_example.pdf}
\end{figure}

\begin{ex}\label{4-gon in m136}
We give an example of a triangulation $\tri_{\text{bad}}$ in which the situation shown in Figure \ref{4-gon_example.pdf} occurs.  We found $\tri_{\text{bad}}$ by modifying the triangulation of the manifold m136 given in the SnapPea census~\cite{snappea}. Our triangulation is a triangulation of 
m136, but is \emph{not} the triangulation given in the census. In fact, according to SnapPea, the triangulation given in the census is itself the canonical Epstein-Penner decomposition into polyhedra. Therefore $\tri_{\text{bad}}$ is not an example of subdividing the Epstein-Penner decomposition in a bad way, since there are no polyhedra that need to be subdivided into tetrahedra.\\

See Table \ref{table_example_gluings} for the gluing data for the 7 tetrahedra of $\tri_{\text{bad}}$, and Table \ref{table_example_edges} for the tetrahedra (with vertex numbers) glued around each of the 7 edges. Tetrahedron \#5 is the one whose edges are identified as in Figure \ref{4-gon_example.pdf}, and we can see this by noticing that it appears twice around edge \#2, as 5 (10) and 5 (03), and twice around edge \#4, as 5 (32) and 5 (12). This gives the edge identifications shown in Figure \ref{4-gon_example.pdf}. Using Regina \cite{regina} we can check that these quadrilaterals, together with a finite number of triangles (twenty) form an immersed, branched, closed normal surface.\\ 

Note that tetrahedron \#3 also has edges arranged as in Figure \ref{4-gon_example.pdf}. However, the corresponding quadrilaterals correspond to a spun-normal class rather than a closed normal class.\\

Also in Table \ref{table_example_gluings}, we list complex shape parameters for the tetrahedra of $\tri_{\text{bad}}$ which solve the gluing and completeness equations, found using Snap \cite{snap}. These parameters are for the edges (01) and (23) of each of the tetrahedra. The orientation convention is such that the edge (02) has shape parameter $\frac{1}{1-z}$  if the parameter for the edge (01) is $z$.
\end{ex}

\begin{table}[htb]
\caption{The gluing data for a triangulation of the manifold m136 from the SnapPea census containing a tetrahedron with identifications as in Figure \ref{4-gon_example.pdf}. To reconstruct the triangulation, take 7 tetrahedra with vertices labelled 0 through 3, all with consistent orientations, and make the appropriate gluings. For example, the top left entry in the table tells us to glue the face of tetrahedron \#0 with vertices labelled 0,1,2 to the face of tetrahedron \#1 with vertices labelled 3,1,2 in the orientation that matches the vertices in the order given.  \newline}
\centering

\begin{tabular}{ |r|cccc|r| }   
\hline
Tetrahedron  & Face 012 & Face 013 & Face 023 & Face 123 & Shape parameter\\ 
\hline
0 & 1 (312) & 4 (302) & 6 (130) & 4 (132) & $2i$\\
1 & 3 (102) & 2 (012) & 2 (203) & 0 (120) & $-1+2i$\\
2 & 1 (013) & 6 (321) & 1 (203) & 4 (031) & $\frac{3}{5} + \frac{1}{5}i$\\
3 & 1 (102) & 6 (230) & 5 (021) & 5 (023) & $-1$\\
4 & 5 (312) & 2 (132) & 0 (130) & 0 (132) & $\frac{1}{5} + \frac{2}{5} i$\\
5 & 3 (032) & 6 (012) & 3 (123) & 4 (120) & $2$\\
6 & 5 (013) & 0 (302) & 3 (301) & 2 (310) & $\frac{1}{2} + \frac{1}{2}  i$\\ [4pt] 
\hline
\end{tabular}

\label{table_example_gluings}
\end{table}

\begin{table}[htb]
\caption{The tetrahedra and vertex numbers of those tetrahedra incident to the edges of the triangulation given in Table \ref{table_example_gluings}. \newline}
\centering
\begin{tabular}{ |r|r|l| }   
\hline
Edge  & Degree &  Tetrahedron (Vertex numbers)\\ 
\hline
0 & 4 & 0 (01), 4 (30), 2 (21), 1 (31) \\
1 & 4 & 0 (02), 1 (32), 2 (30), 6 (13) \\
2 & 10 & 0 (03), 6 (10), 5 (10), 3 (30), 6 (02), 5 (03), 3 (13), 6 (30), 0 (23), 4 (32)\\
3 & 10 & 0 (12), 4 (13), 2 (32), 1 (30), 2 (20), 1 (02), 3 (12), 5 (02), 3 (02), 1 (12)\\
4 & 6 & 0 (13), 4 (02), 5 (32), 3 (32), 5 (12), 4 (12)\\
5 & 4 & 1 (01), 2 (01), 6 (32), 3 (10)\\
6 & 4 & 2 (13), 6 (21), 5 (31), 4 (01)\\
\hline
\end{tabular}
\label{table_example_edges}
\end{table}

\begin{rmk}
This example led us to the condition in Theorem \ref{no non periph H1 implies sas}. The square with edges identified as in Figure \ref{4-gon_example.pdf}(a) glues up to form a non-separating surface, and so a homology obstruction.
\end{rmk}

\begin{thm}
Suppose $M$ is a cusped hyperbolic 3-manifold with a geometric polyhedral decomposition consisting of polyhedra which have no $4$-gons or $6$-gons. Then $M$ has an ideal triangulation which admits a strict angle structure.
\end{thm}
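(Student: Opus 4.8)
The plan is to apply Algorithm~\ref{polyhedra cone vertices to triangulation} to the given geometric polyhedral decomposition $\mathcal{P}$ of $M$, obtaining a PPP-cellulation $\mathcal{P}'$ and an ideal triangulation $\tri$ of $M$, and then to show that $\tri$ admits a strict angle structure by verifying the criterion of Corollary~\ref{cor: sas iff no vert}: that there is no vertical normal class. First I would fix an arbitrary choice of cone vertex $v_i$ for each polyhedron $P_i$ of $\mathcal{P}$ and run the algorithm. This produces $\tri$ together with its natural semi-angle structure (Remark~\ref{rmk: natural angle structure}), in which the tetrahedra inside the polyhedra have strictly positive angles coming from the hyperbolic shapes, and only the tetrahedra inside polygonal pillows are flat. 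Every polygonal pillow of $\mathcal{P}'$ sits between two faces of polyhedra of $\mathcal{P}$, and since $\mathcal{P}$ has no $4$-gons or $6$-gons, every polygonal pillow is an $n$-gonal pillow with $n \neq 4, 6$ (here $n \geq 3$, and $n=3$ pillows are trivial and carry no tetrahedra).

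Next I would invoke the main combinatorial input, Lemma~\ref{quadrilaterals in only 4 and 6 gons}: any vertical normal class $S$ for $\tri$ can contain vertical quadrilaterals only inside a polygonal pillow whose underlying polygon has $4$ or $6$ sides. Combined with Remark~\ref{vertical only in pillows} — which says a vertical class can have nonzero quadrilateral coordinates only inside the layered triangulations of the polygonal pillows, because the tetrahedra inside the polyhedra have all angles positive — this forces every quadrilateral coordinate of $S$ to vanish, since by hypothesis $\mathcal{P}'$ has no $4$-gonal or $6$-gonal pillows. Therefore there is no $S \in \mathcal{N}(M;\tri)$ with all non-vertical quadrilateral coordinates zero and at least one quadrilateral coordinate positive; i.e.\ $\tri$ has no vertical normal class.

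Finally, by Corollary~\ref{cor: sas iff no vert}, the absence of a vertical normal class is equivalent to $\tri$ admitting a strict angle structure, which completes the proof.

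I do not expect any serious obstacle here: the theorem is essentially an immediate corollary of the machinery already assembled, with the combinatorial heart of the matter residing in Lemma~\ref{quadrilaterals in only 4 and 6 gons} (itself resting on Lemmas~\ref{change in arc pattern}, \ref{change by +- lambda} and \ref{quadrilaterals in pillow adjacent cone vertices}). The only minor point requiring care is checking that Lemma~\ref{quadrilaterals in only 4 and 6 gons} applies to every polygonal pillow produced by Algorithm~\ref{polyhedra cone vertices to triangulation} — but this is built into the algorithm, since each pillow is filled using Algorithm~\ref{layered triang cone triangulations} with cone triangulations on either side, exactly the hypothesis of Lemma~\ref{quadrilaterals in only 4 and 6 gons}. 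One should also note explicitly that $M$ has at least one geometric polyhedral decomposition (the Epstein--Penner decomposition), though here one is given directly.
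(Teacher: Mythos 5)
Your proposal is correct and follows essentially the same argument as the paper: run Algorithm~\ref{polyhedra cone vertices to triangulation} with arbitrary cone vertices, note that no pillow has $4$ or $6$ sides since the polyhedral faces don't, and combine Remark~\ref{vertical only in pillows} with Lemma~\ref{quadrilaterals in only 4 and 6 gons} and Corollary~\ref{cor: sas iff no vert} to rule out vertical normal classes and conclude a strict angle structure exists.
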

\begin{proof}
Use Algorithm \ref{polyhedra cone vertices to triangulation} (with arbitrary choice of cone vertices) to produce a triangulation $\tri$. By Lemma \ref{quadrilaterals in only 4 and 6 gons} and Remark \ref{vertical only in pillows}, there are no vertical quadrilaterals in a vertical normal class $S\in \mathcal{N}(M;\tri)$. By Corollary \ref{cor: sas iff no vert}, $\tri$ admits a strict angle structure.
\end{proof}

\begin{rmk}
There are many ways in which this result can be extended, altering the hypothesis on the combinatorics of the geometric polyhedral decomposition. For example, there is a lot of choice in the cone vertices we use to triangulate the polyhedra.
If, for instance, each polyhedron has at most one 4-gon or 6-gon, then we can choose our cone vertices to not be on a 4-gon or 6-gon. Then the 4-gons and 6-gons are all bases of pyramids after coning, and so in Algorithm \ref{polyhedra cone vertices to triangulation} the triangulations match, we have no $n$-gonal pillows with $n=4$ or 6, and again we have no vertical quadrilaterals in a vertical normal class.
\end{rmk}

\section{Normal surfaces in PPP-cellulations}\label{sec: normal surfaces in PPP}

The aim of this section is to give topological conditions for the existence of a triangulation which admits a strict angle structure. To that end, we consider a particular class of surfaces, giving a notion of normality relative to a PPP-cellulation.\\ 

The types of normal disks we allow in each cell are as follows: In each polyhedron, we allow only vertex linking disks. In each $n$-gonal pillow, we allow vertex linking bigons, and when $n$ is even, twisted $n$-gons of two possible types. See Figure \ref{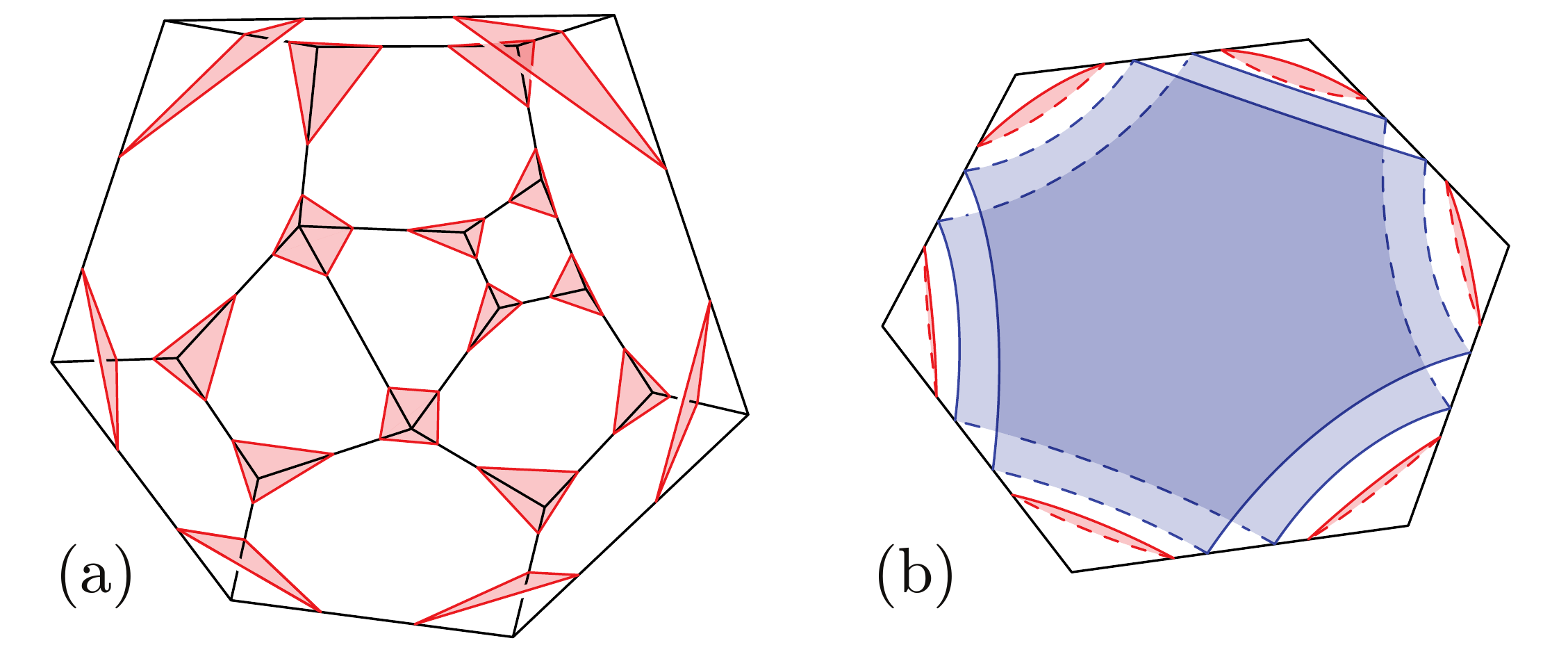}. Each twisted $n$-gon has its vertices on the edges of the $n$-gonal pillow, and edges alternating on the top and bottom faces of the pillow. Whether the edge is on the top or the bottom at a given vertex distinguishes the two types of twisted $n$-gon disk. For our purposes, we will only have $n$-gonal pillows where $n$ is $4$ or $6$. \\

\begin{defn}
Suppose that $M$ is a manifold with a PPP-cellulation $\mathcal{P}'$. We define a subset  $\mathcal{N}(M;\mathcal{P}')$
of $(\Z_{\geq 0})^N$, where $N$ is the sum of the number of vertices in each polyhedron, plus the number of vertices in each polygonal pillow, plus two times the number of 4 or 6-sided polygonal pillows (one dimension for each allowed type of normal disk). $\mathcal{N}(M;\mathcal{P}')$ consists of vectors satisfying compatibility equations on the 2-cells of $\mathcal{P}'$ in the manner analogous to the normal surface compatibility equations for triangulations.
\end{defn}

\begin{figure}[htb]
\centering
\includegraphics[width=\textwidth]{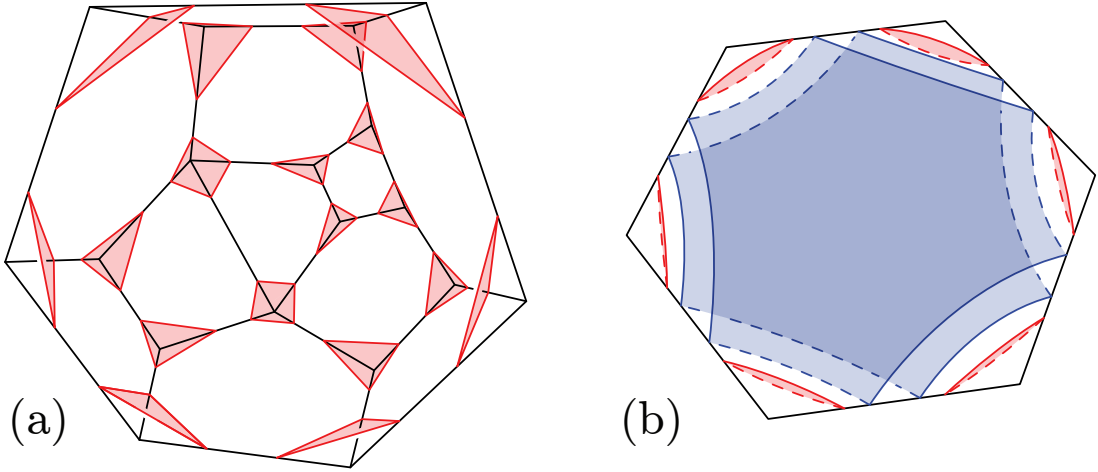}
\caption{In (a), a polyhedron with the allowed vertex linking normal disk types. In (b), a hexagonal pillow, with vertex linking normal bigons, and with two parallel copies of one of the two twisted hexagon normal disks. Arcs on the upper face of the pillow are drawn with solid lines, while those on the bottom are drawn dashed. }
\label{polyhedron_and_polygonal_pillow.pdf}
\end{figure}

Let $\mathcal{P}'$ be a PPP-cellulation of a manifold $M$, and $\tri$ a triangulation of $M$ where both $\mathcal{P}'$ and $\tri$ are formed from a geometric polyhedral decomposition $\mathcal{P}$ together with a compatible choice of cone vertices as in Algorithm \ref{polyhedra cone vertices to triangulation}. To each vertical $S \in \mathcal{N}(M;\tri)$ we associate a unique $S' \in  \mathcal{N}(M;\mathcal{P}')$ as follows: Within a polyhedron, all of the tetrahedra sharing a particular vertex have the same number of normal triangles at that vertex. We take this many normal disks linking the vertex in $S'$. For each polygonal pillow, Lemma \ref{change by +- lambda} tells us the possible arc patterns associated to $S$ on either side. The sign of the integer $\lambda$ given in this Lemma tells us which of the two types of twisted surface to use in $S'$, and the magnitude tells us how many. The remaining arc multiplicities tell us how many bigons to use at each vertex.\\

Note that given a vertical class $S \in \mathcal{N}(M;\tri)$, the corresponding $S'\in  \mathcal{N}(M;\mathcal{P}')$ gives a closed \emph{embedded} surface in $M$. If we can rule out the existence of these kinds of surfaces, then we rule out the existence of a vertical $S \in \mathcal{N}(M;\tri)$, and so show the existence of a strict angle structure.

\begin{proof}[of Theorem \ref{no non periph H1 implies sas}]
Let $M$ be a cusped hyperbolic 3-manifold homeomorphic to the interior of a compact 3-manifold $\overline{M}$ with torus or Klein bottle boundary components. The Epstein-Penner decomposition gives us a geometric polyhedral decomposition $\mathcal{P}$. We construct our triangulation $\tri$ and PPP-cellulation $\mathcal{P}'$ from $\mathcal{P}$ using Algorithm \ref{polyhedra cone vertices to triangulation}, but with a special choice of the cone vertices, as follows. The polyhedra of $\mathcal{P}$ together with their identifications determine a graph $\Gamma$ with one vertex for each polyhedron and an edge joining two vertices for each face incident to the two corresponding polyhedra.
Choose a maximal spanning tree $T$ for this graph. We will choose our cone vertices so that the 2-cells of $\mathcal{P}^{(2)}$ corresponding to the edges of $T$ have no polygonal pillows.\\ 

In order to do this, first choose the cone vertex for each polyhedron corresponding to a 
leaf\footnote{A leaf of a graph will mean a vertex of degree one.}
of $T$ to be any vertex not on the 2-cell corresponding to the edge of $T$ incident to the leaf. 
Then, remove the leaves and their incident edges from $T$ 
 and repeat, choosing the cone vertices on the new leaves in the same way. Eventually, the tree shrinks to either a single vertex, or the empty set (if the penultimate tree was a single edge). In the former case, we arbitrarily choose the cone vertex for the last polyhedron. 
 Let  $\mathcal{P}'$ denote the resulting PPP-cellulation.\\

Assume for contradiction that there is a vertical class $S\in \mathcal{N}(M;\tri)$, and therefore a corresponding $S' \in \mathcal{N}(M,\mathcal{P}')$. If $S'$ is not a fundamental surface (i.e.\thinspace $S'$ is a Haken sum) 
then we replace it with a summand which is. 
Then this fundamental surface is closed and connected, and we also require that it is not a peripheral torus or Klein bottle, i.e.\thinspace that it has some twisted disks.\\

We can embed the graph $\Gamma$ in $M$ so that each vertex is in the interior of the corresponding
polyhedron of $\mathcal{P}$ and each edge intersects the corresponding face of $\mathcal{P}$ transversely in one point. Then we can isotope $S'$ so that the bigons and vertex linking disks in $S'$ are disjoint from $\Gamma$ and each twisted disk in $S'$ intersects $\Gamma$ transversely in one point. Further, by the construction of $\mathcal{P}'$, $S'$ is then disjoint from the spanning tree $T \subset \Gamma$.\\

Assume that some twisted disk has odd multiplicity in $S'$ so meets an edge $e$ of $\Gamma$ 
transversely in an odd number of points. Then $T \cup e$ contains a simple closed curve $\gamma$
that has odd intersection number with $S'$, so represents a non-trivial element of $H_1(\overline{M}; \Z_2)$. 
This element is not in the image of the map from $H_1(\partial \overline{M}; \Z_2)$ in the long exact sequence 
$$\cdots \to H_2(\overline{M}, \partial \overline{M}; \Z_2)  \to H_1(\partial \overline{M}; \Z_2) \to H_1(\overline{M}; \Z_2) \to H_1(\overline{M}, \partial \overline{M}; \Z_2) \to \cdots $$
because no sum of peripheral loops can have odd intersection with a closed surface. Therefore $\gamma$ has non-zero image under the map $H_1(\overline{M}; \Z_2) \to H_1(\overline{M}, \partial \overline{M}; \Z_2)$, giving a contradiction.\\

So, assume that all the twisted disks have even multiplicity in our fundamental surface $S'$. If all the bigons and vertex linking disks also have even multiplicity, then the whole surface is a double of another surface so is not fundamental. \\

Therefore we may assume without loss of generality that at least one bigon or vertex linking disk has odd multiplicity. 
Consider the set of normal disks $S''$ consisting of one copy of each bigon or vertex linking disk with odd multiplicity. We claim that this set of normal disks satisfies the normal surface compatibility equations. To see this, consider the compatibility equation for a given normal arc $\alpha$ in a face $F$ of the PPP-cellulation. Three types of normal disk contribute to the multiplicity for $\alpha$: one type of vertex linking disk in the polyhedron incident to $F$, one type of bigon in the polygonal pillow incident to $F$, and one type of twisted disk in that polygonal pillow. Since we have an even number of twisted disks at $\alpha$, the numbers of bigons and vertex linking disks at $\alpha$ must have the same parity in order to satisfy the compatibility equation for $\alpha$. Thus, if $S''$ has any disks at $\alpha$, it must have one vertex linking disk and one bigon, and since these are incident to $\alpha$ from opposite sides of $F$, $S''$ satisfies the compatibility equation at $\alpha$.\\

So our surface $S'$ is a Haken sum of $S''$ with some surface containing at least one twisted disk. Once again, $S'$ is not fundamental and the proof is complete. \end{proof}

\begin{rmk} \label{obstruction} The homological obstruction to finding a strict angle structure arising in the above proof can be described explicitly as follows: Given a  vertical normal class $S\in \mathcal{N}(M;\tri)$, let $S_{bad}$ be the union of all the faces in $\mathcal{P}$ corresponding to polygonal pillows in $\mathcal{P}'$ where the invariant $\lambda$ from Lemma \ref{change by +- lambda} is odd. (Recall that $\lambda$ describes the difference in multiplicities of vertex linking arcs in the arc patterns of $S$ on the top and bottom of the pillow.) \\

Then $S_{bad}$ represents a non-trivial relative homology class 
$[S_{bad}] \in H_2(\overline{M},\partial \overline{M}; \Z_2)$, 
and is the image of the homology class $[S'] \in H_2(\overline{M};\Z_2)$ of the closed surface $S'$ constructed above under the natural homomorphism 
$ H_2(\overline{M};\Z_2) \to H_2(\overline{M},\partial \overline{M}; \Z_2)$. Further, by Lemma \ref{quadrilaterals in only 4 and 6 gons}, 
we see that $[S_{bad}]$  is represented by a sum of $4$-gons and $6$-gons in $\mathcal{P}$.\\

Alternatively, by Lefschetz duality, we can think of the obstruction as a cohomology class  $[S_{bad}]^* \in H^1(\overline{M};\Z_2)$. This can be regarded as the homomorphism $H_1(\overline{M};\Z_2) \to \Z_2$,  vanishing on the image of $H_1(\partial \overline{M};\Z_2)$, given by taking the mod 2 intersection number with $[S_{bad}]$. 
\end{rmk}

\begin{rmk}\label{not_geometric}
Although our construction gives triangulations which admit strict angle structures, unfortunately they will not generally be geometric. In particular, after we cone the polyhedra obtained from the geometric polyhedral decomposition, if any triangulations of faces do not match then the triangulation of the manifold we construct will not be geometric. The reason for this is that we know what the shapes of the tetrahedra will be in the complete hyperbolic structure: they are determined by the shapes of the polyhedra in the geometric polyhedral decomposition. In particular, a tetrahedron inserted to bridge between different triangulations of a polygon will be flat in $\mathbb{H}^3$, since they are contained within the ideal hyperbolic polygon. \end{rmk}

\begin{ex}\label{strict angle struct not geometric}
We give an example of a triangulation demonstrating the features described in Remark \ref{not_geometric}: it is not geometric but it admits a strict angle structure. Our manifold $M$ is the complement of the Turk's head knot, shown in Figure \ref{turks_head_knot.pdf}. Proposition I.2.5 of \cite{sakumaweeks95} tells us that the canonical decomposition of $M$ consists of two square anti-prisms, and is the same as the decomposition described in the proof of Theorem \ref{alternating link}. See Figure \ref{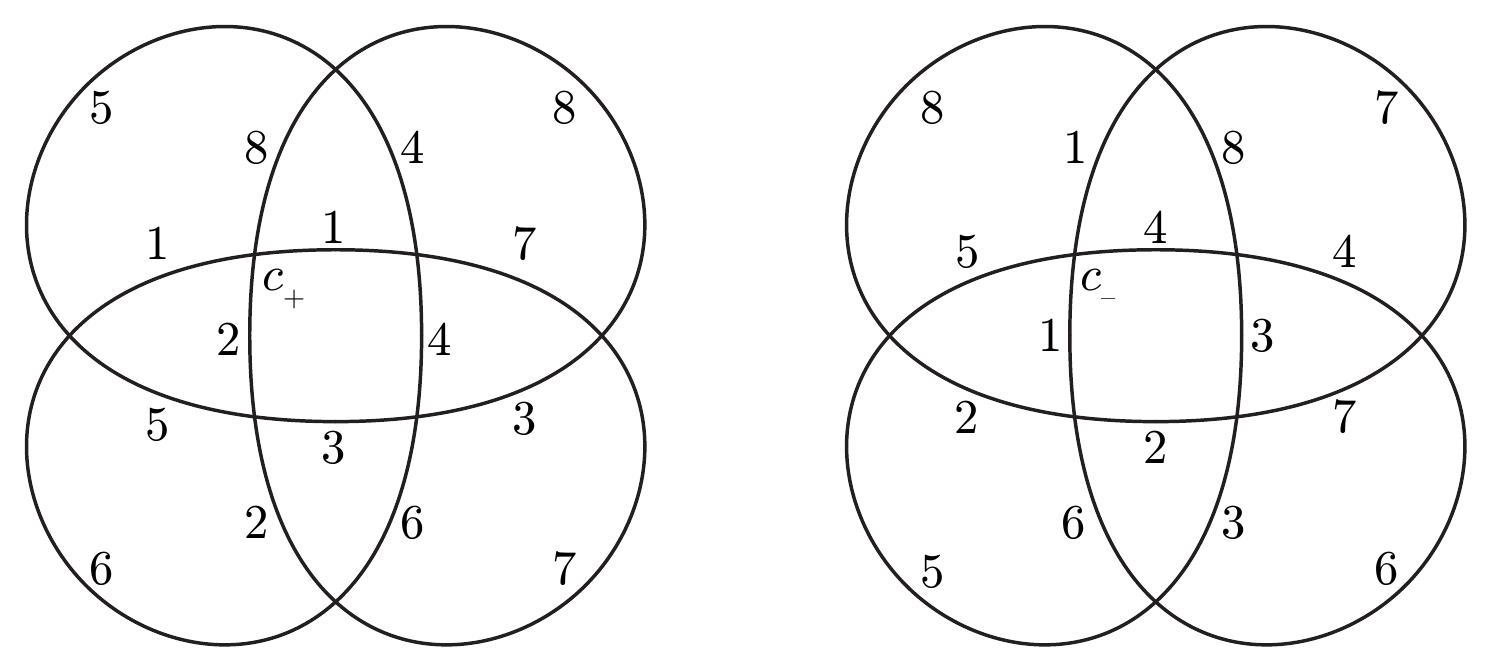}. Each polyhedron has two square faces and eight triangular faces, and each face is glued to a corresponding face of the other polyhedron. There are 8 edges after the gluings, which are numbered on the diagrams, and these labels determine the gluing.\\

\begin{figure}[htb]
\centering
\includegraphics[width=\textwidth]{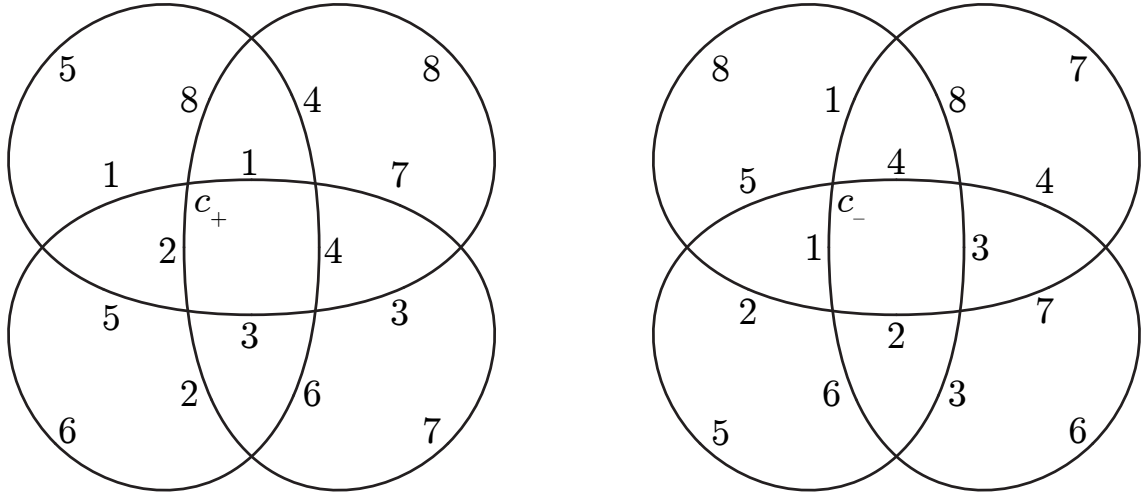}
\caption{The gluing combinatorics for the two square antiprisms that make up the canonical decomposition of the complement of the Turk's head knot. The left diagram shows the boundary of the polyhedron above the plane of the knot diagram, and shows the boundary pattern from inside the polyhedron. The right diagram shows the polyhedron below the plane, and shows boundary pattern from outside the polyhedron. }
\label{turks_head_knot_polyhedra.pdf}
\end{figure}

If we apply Algorithm \ref{polyhedra cone vertices to triangulation} with this geometric polyhedral decomposition and choice of cone vertices the vertices marked $c_+$ and $c_-$, then the resulting PPP-cellulation has a single 4-gonal pillow corresponding to the central square in the two diagrams of Figure \ref{turks_head_knot_polyhedra.pdf}, which has as layered triangulation a single tetrahedron $\sigma$. The resulting triangulation is not geometric, since $\sigma$ is flat at the complete structure. However, it does have a strict angle structure, by the following argument. First, by Corollary \ref{cor: sas iff no vert}, a strict angle structure exists if a non-trivial vertical normal class does not. By Remark \ref{vertical only in pillows}, we can only have vertical quadrilaterals in $\sigma$, since all other tetrahedra have positive angles. However, the Q-matching equations would have to hold for such a vertical class. From Figure \ref{turks_head_knot_polyhedra.pdf} we see that there are no identifications of the edges of $\sigma$ (they are the edges labelled 1, 2, 3 and 4, together with two edges corresponding to the diagonals of the 4-gon, whose other incident tetrahedra are inside the two respective polyhedra). This means that the Q-matching equations are impossible to satisfy in this situation, if we use a non-zero number of only vertical quadrilaterals. Therefore the only possible vertical class is trivial, and so by Corollary \ref{cor: sas iff no vert} we have the existence of a strict angle structure. \\

Note that this construction relies on a particularly `bad' choice of cone vertices. We can also choose them in such a way that the resulting triangulation is geometric.
\end{ex}

\section{Generalisations}\label{generalisations}

\subsection{Using combinatorial polyhedra}

Suppose $M$ is the interior of a compact 3-manifold with torus or Klein bottle boundary components and that $M$  is irreducible and atoroidal.
Assume we are given a method to divide $M$ into a collection of ideal polyhedra, each of which is assigned dihedral angles satisfying the conditions for Rivin's generalisation of Andreev's theorem, Theorem \ref{rivin_andreev}.\\

One can then also require that the sum of dihedral angles around each edge is $2\pi$, giving a notion of a polyhedral angle structure. We then apply Rivin's theorem to each polyhedron so that it has a convex ideal hyperbolic structure. 
This means that we can divide each polyhedron by coning from a vertex and get a
solution of the angle equations with flat regions, exactly as for the
geometric polyhedral decomposition case. In particular, we can proceed as before and get strict angle
structures on the tetrahedra, assuming that the homological obstruction in Theorem \ref{no non periph H1 implies sas} vanishes. \\

Note that in the above, we are not interested in the question as to whether the convex ideal hyperbolic polyhedra glue together by isometries in $\H^3$, just the angle conditions.

\begin{rmk}
Section 4 is an example of this approach. It would be interesting to find other ways of generating such polyhedral decompositions. In particular, knowing we have a complete hyperbolic structure
on $M$ allows us to use Epstein-Penner to obtain such a good decomposition, but it is reasonable to ask if there are direct methods of 
finding decompositions with the above properties. 
\end{rmk}

\begin{rmk}
A similar strategy may also work using the more general angled blocks of Futer and Gu\'{e}ritaud \cite{futer_gueritaud_angled_blocks}. However, since these blocks are not necessarily convex (or even simply connected), the coning construction would not be enough to subdivide them into ideal tetrahedra.
\end{rmk}

\subsection{Dealing with non-cone triangulations of polygons}

For the results of this paper, we only needed to use the coning construction to subdivide our polyhedra, but there are many other ways of subdividing a polyhedron into tetrahedra without introducing additional vertices.  In particular, the construction of a layered triangulation in a polygonal pillow of Section \ref{sec: layered triangs} assumes that the triangulations of the polygon on either side of the polygonal pillow are both cone triangulations. However, other constructions may not have this feature. With a more complicated version of Algorithm \ref{layered triang cone triangulations}, it is possible to bridge between arbitrary triangulations of a polygon, with properties along the lines of Lemma \ref{quadrilaterals in only 4 and 6 gons}. For example, the following lemma (which we state without proof) was part of an early version of our approach to this problem.

\begin{lemma}\label{no change in polygon}
Let $t_-$ and $t_+$ be triangulations of the $n$-gon. Then there exists a layered triangulation of $Q$, with triangulations $t_-$ and $t_+$ on the two sides, and with the following property:

Suppose we have a vertical normal class $S$ in a triangulation of a PPP-cellulation $\mathcal{P}$ of a manifold, with $Q$ an $n$-gonal pillow in $\mathcal{P}$ with the given layered triangulation. If $S$ has the same arc pattern on the two sides of $Q$, each pattern made up of vertex linking arcs with some multiplicity at each vertex, then the arc pattern is the same on all layers of $Q$, and so $Q$ contains no vertical quadrilaterals of $S$.
\end{lemma}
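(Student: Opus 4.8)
The plan is to route through a cone triangulation, so that the analysis reduces to controlled situations of the kind already treated in Lemmas \ref{change in arc pattern}, \ref{change by +- lambda} and \ref{quadrilaterals in pillow adjacent cone vertices}. Fix a vertex $v_1$ of the $n$-gon and write $C_{v_1}$ for the cone triangulation of the $n$-gon from $v_1$. It is a standard fact that any triangulation $t$ of the $n$-gon can be transformed into $C_{v_1}$ by a sequence of diagonal flips each of which removes a diagonal not incident to $v_1$ and inserts one incident to $v_1$: if $t \ne C_{v_1}$ and $v_1 v_2 v_k$ is the triangle of $t$ on the polygon edge $(v_1,v_2)$, then if $k=3$ we pass to the subpolygon $v_1 v_3 v_4\cdots v_n$, and otherwise we flip the edge of $t$ separating $v_1 v_2 v_k$ from the adjacent triangle, which creates a diagonal $(v_1,v_m)$ with $2<m<k$; iterating terminates at $C_{v_1}$. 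Call such a flip sequence \emph{good}. Given $t_-$ and $t_+$, choose good sequences $\Sigma_-\colon t_-\to C_{v_1}$ and $\Sigma_+\colon t_+\to C_{v_1}$, and let the layered triangulation of $Q$ be the one associated to $\Sigma_-$ followed by the reversal of $\Sigma_+$; it runs $t_-\to C_{v_1}\to t_+$. (If $t_-=t_+$ use the empty sequence, and there is nothing to prove.)

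Now let $S$ be a vertical normal class with the same vertex linking arc pattern on $Q_-$ and $Q_+$, say with multiplicity $w_j$ at $v_j$. The first step is to show that no ``crossing'' arcs ever occur, i.e.\thinspace that the arc pattern on every layer consists only of vertex linking arcs. A flip whose tetrahedron has vertical quadrilateral multiplicities $x_1,x_2$ introduces $e=|x_1-x_2|$ crossing arcs (Lemma \ref{change in arc pattern}); by property \ref{prop1} of that lemma these arcs survive, in the combinatorial pattern of Figure \ref{retri_polygon_2_cones_arc_patterns.pdf}, into all later layers, so they would appear as non-vertex-linking arcs in $t_+$ --- exactly the situation ruled out at the end of the proof of Lemma \ref{quadrilaterals in pillow adjacent cone vertices}. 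Since $t_+$ carries only vertex linking arcs, every flip has $e=0$, hence $x_1=x_2$ on each flipped tetrahedron; so the arc pattern on the $i$-th layer $L_i$ is vertex linking with some multiplicities $w_j^{(i)}$, with $w_j^{(1)}=w_j^{(k)}=w_j$. Because there are no crossing arcs, for each boundary edge $(v_j,v_{j+1})$ of the polygon we have $w_j^{(i)}+w_{j+1}^{(i)}=a_j$, where $a_j$ is the layer-independent intersection number of $S$ with that edge.

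For $n$ odd the system $w_j^{(i)}+w_{j+1}^{(i)}=a_j$ (indices mod $n$) has a unique solution, so $w^{(i)}$ is independent of $i$; since a flip with $x_1=x_2=\lambda>0$ changes $w^{(i)}$ by $\pm\lambda$ at the four corners of its quadrilateral (Remark \ref{arc_change_mnemonic}), every flip has $\lambda=0$, and so $Q$ contains no vertical quadrilaterals of $S$. For $n$ even the same system determines $w^{(i)}$ only up to the alternating ambiguity $w_j^{(i)}=c_j+(-1)^j\mu_i$, and it remains to show that $\mu_i$ is constant. This is where the good flip sequence is used: the vertex $v_1$ is an increasing corner of every flip of $\Sigma_-$ and a decreasing corner of every flip of the reversed $\Sigma_+$, so $w_1^{(i)}$, hence $\mu_i$, is monotone along the first half and monotone the other way along the second half; together with $\mu_1=\mu_k$ this already forces the two total flip weights to agree. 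To pin $\mu_i$ down one uses in addition that the intersection number of $S$ with any diagonal $(v_j,v_\ell)$ present on layers $L_i,\dots,L_{i'}$ equals $w_j^{(\cdot)}+w_\ell^{(\cdot)}=c_j+c_\ell\pm 2\mu_\cdot$, which must be constant on that range whenever $j\equiv\ell\pmod 2$; arranging $\Sigma_-$ and $\Sigma_+$ so that an ``even'' diagonal at $v_1$ (such as $(v_1,v_3)$) is present on as many layers as possible, and checking the remaining boundary flips directly, then yields constancy of $\mu_i$ and hence $\lambda=0$ for every flip.

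The main obstacle is precisely this last step: making the control of the alternating parameter $\mu_i$ in the even case fully rigorous --- equivalently, ruling out a ``breathing'' of the alternating part of the vertex linking pattern that returns to its initial value by the last layer. By contrast, the odd case is clean once the absence of crossing arcs has been established, and that absence is a direct adaptation of the bookkeeping already carried out in the proof of Lemma \ref{quadrilaterals in pillow adjacent cone vertices}, applied to the (very restricted) shapes of the layers produced by a good flip sequence.
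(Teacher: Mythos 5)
The paper states Lemma \ref{no change in polygon} without proof (it is explicitly flagged as ``stated without proof''), so there is no argument of the authors to compare yours against; judged on its own, your proposal has a genuine gap at its central step. You claim that the only way non-vertex-linking arcs can appear on a layer is through the $e=|x_1-x_2|$ crossing arcs of Lemma \ref{change in arc pattern}, that such arcs ``survive into all later layers'' by property \ref{prop1}, and hence that $e=0$ for every flip forces every layer to be vertex linking. Neither half of this is correct. Property \ref{prop1} only compares consecutive layers outside the flipped quadrilateral; later flips reconnect the strands inside their own quadrilaterals, so persistence of non-vertex-linking arcs down to $t_+$ is precisely what must be proved --- it is the content of the train-track bookkeeping in the proof of Lemma \ref{quadrilaterals in pillow adjacent cone vertices}, which is carried out for the specific flip sequence of Algorithm \ref{layered triang cone triangulations} and does not transfer by citation to your ``good'' sequences. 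Worse, the converse inference fails: a flip with $x_1=x_2=\lambda>0$ has $e=0$ and yet creates non-vertex-linking arcs whenever some side of its quadrilateral is a diagonal of the layer, because at the ends of such a side the corner multiplicities change by $\pm\lambda$ while the arcs outside the quadrilateral are unchanged, so $\lambda$ strands get spliced around one vertex inside the quadrilateral and around a different vertex outside it; this is exactly the curve of multiplicity $\lambda_1$ ``crossing the polygon'' in Figure \ref{retri_polygon_2_cones_arc_patterns.pdf}. Consequently your assertion that each layer $L_i$ carries only vertex-linking arcs with multiplicities $w_j^{(i)}$, and the per-layer equations $w_j^{(i)}+w_{j+1}^{(i)}=a_j$ on which everything afterwards rests, are unjustified.

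The second gap you concede yourself: in the even case, monotonicity of $w_1^{(i)}$ on the two halves together with $\mu_1=\mu_k$ does not make $\mu_i$ constant --- it permits exactly the ``breathing'' you describe --- and the proposed remedy (keeping an even diagonal at $v_1$ present on many layers and ``checking the remaining boundary flips directly'') is a plan rather than a proof. Note, though, that this second difficulty is largely an artifact of the first: if you really knew that consecutive layers were vertex linking, then for $n\geq 5$ the flip between them could carry no vertical quadrilaterals at all, since it changes only its four corner multiplicities and some boundary edge of the polygon joins a changed corner to an unchanged vertex, contradicting the fixed edge intersection numbers (and for $n=4$ one simply takes at most one flip). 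So the entire content of the lemma is concentrated in the step you assumed: ruling out non-vertex-linking arcs on the intermediate layers of your chosen layered triangulation, by an analogue of the non-negativity and accumulation argument of Lemma \ref{quadrilaterals in pillow adjacent cone vertices} adapted to your flip sequence. The overall plan of routing through a cone triangulation is reasonable and may well be close to what the authors intended, but as written that analysis is missing.
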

Note that the arc pattern on a side of $Q$ makes sense as a set of disjoint curves on a polygon, rather than the triangulated polygon. It is in this sense that we mean that the arc pattern is the same on the two sides of $Q$, despite the triangulations being different.

\begin{rmk}
As an example of how Lemma \ref{no change in polygon} could be useful, suppose that we have a geometric polyhedral decomposition of our manifold, and that we can subdivide the polyhedra in such a way that the triangulations of polygonal faces fail to match their counterparts only in the case of polygons with an odd number of sides. (We also need that the resulting ideal hyperbolic tetrahedra have positive volume.) This is possibly easier to achieve using arbitrary subdivisions of our polyhedra, rather than only using coning.  Lemma \ref{no change in polygon} then gives layered triangulations of the polygonal pillows needed to bridge between non-matching triangulations of polygonal faces. We then ask if there are any non-trivial vertical normal classes, as in Corollary \ref{cor: sas iff no vert}. By Lemma \ref{change by +- lambda}, we know that for such a class and for each (by assumption, odd sided) polygonal pillow, the patterns of arcs on the top and bottom copies of the polygon would be identical, each pattern made up of vertex linking arcs with some multiplicity at each vertex. Lemma \ref{no change in polygon} then says that the vertical class would be entirely made up of triangles in each polygonal pillow, and so we get the existence of a strict angle structure.
\end{rmk}

\end{document}